\numberwithin{equation}{section}
\newtheorem{thm}{Theorem}[section]
\newtheorem{lemma}[thm]{Lemma}
\newtheorem{definition}[thm]{Definition}
\newtheorem{conjecture}{Conjecture}
\newtheorem*{question}{Question}
\newcommand{\RR}{\mathbb{R}}
\newcommand{\CC}{\mathbb{C}}
\newcommand{\ZZ}{\mathbb{Z}}
\newcommand{\p}{\partial}
\newcommand{\be}{\begin{equation}}
\newcommand{\ee}{\end{equation}}
\newcommand{\la}{\label}
\newcommand{\rt}{\rm}
\begin{document}

\title[Free boundary problem and the isoperimetric inequality]{A free boundary problem associated with the isoperimetric inequality}

\author[Ar. Abanov]{Artem Abanov}
\email{abanov@tamu.edu}
\address{MS 4242,Texas A\&M University, College Station, TX 77843-4242}

\author[C. B\'{e}n\'{e}teau]{Catherine B\'{e}n\'{e}teau}
\email{cbenetea@usf.edu}
\address{4202 E. Fowler Ave., CMC342, Tampa, FL 33620}

\author[D. Khavinson]{Dmitry Khavinson}
\email{dkhavins@usf.edu}
\address{4202 E. Fowler Ave., CMC342, Tampa, FL 33620}

\author[R. Teodorescu]{Razvan Teodorescu}
\email{razvan@usf.edu}
\address{4202 E. Fowler Ave., CMC342, Tampa, FL 33620}

\keywords{isoperimetric inequality, free boundary problem, droplets, quadratic differentials, Schwarz function}
\subjclass{Primary: 30D05, Secondary: 30E10, 30E25}


\begin{abstract}
This paper proves a 30 year old conjecture that disks and annuli are the only domains where analytic content - the uniform distance from $\bar{z}$ to analytic functions - achieves its lower bound.  This problem is closely related to several well-known free boundary problems, in particular, Serrin's problem about laminary flow of incompressible viscous fluid for multiply-connected domains, and Garabedian's problem on the shape of electrified droplets.  Some further ramifications and open questions, including extensions to higher dimensions, are also discussed.
\end{abstract}

\maketitle

\section{Introduction}

This paper solves a long-standing conjecture associated with the isoperimetric inequality, rational approximation, analytic content, and related free boundary problems.
Let $K$ be a compact set in the complex plane. Let $C(K)$ be the algebra of continuous complex-valued functions on $K$ equipped with the standard uniform norm $\|f\|_{\infty} = \sup_{z \in K} |f(z)|$, and let $R(K)$ be the subalgebra of the closure inside $C(K)$ of rational functions with poles off $K$.
The analytic content of $K$ (\cite{GaKh}) is defined by
\begin{definition}\label{ac}
    $$ \lambda(K):= \inf_{\phi \in R(K)} \| \bar{z}-\phi \|_{C(K)}.$$
\end{definition}
Analytic content serves as one of the possible indicators for $K$ to carry a reasonable complex-analytic structure.  For example, it follows at once from the Stone-Weierstrass theorem that $\lambda(K) = 0$ if and only if $R(K)=C(K),$ in other words, the elements of this rather special algebra of rational functions on $K$ are simply generic continuous functions on $K$. The concept of analytic content can be readily extended to deal with other spaces of ``nice" functions, such as the space of harmonic functions (see the concept of harmonic content in \cite{GuKh,Kh87}), or spaces consisting of solutions of more general elliptic equations (\cite{Kh88}).  For more information and for a comprehensive literature review, we refer the reader to the survey \cite{BeKh}.

\begin{figure}[h!!] \begin{center}
\includegraphics*[width=13cm]{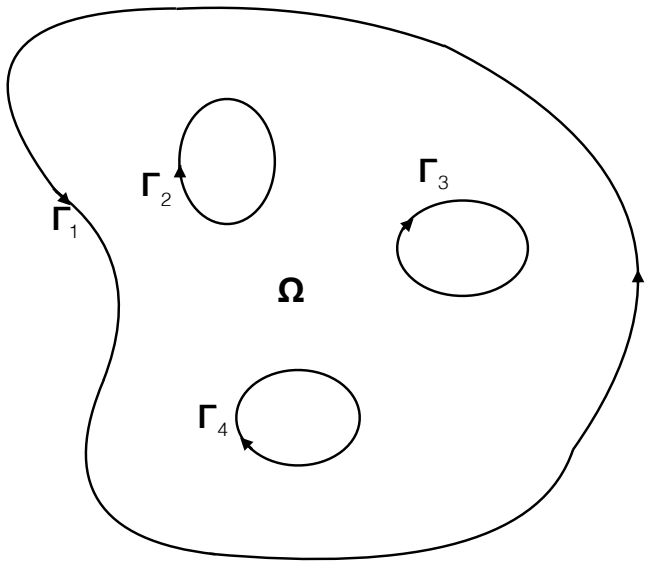}
\caption{The domain $\Omega$ and its boundary components, shown with their orientations relative to $\Omega$ (clockwise for the interior contours, counterclockwise for the exterior one).}
\label{fig1}
\end{center}
\end{figure}

The analytic content of a set enjoys nice estimates in terms of simple geometric characteristics of $K$.  Indeed,
\begin{equation}\label{geo}
    \frac{2Area(K)}{P(K)} \leq \lambda(K) \leq \sqrt{\frac{Area(K)}{\pi}}.
\end{equation}
Here, $P(K)$ is the perimeter of $K$, which is of course finite if for example
$K$ is the closure of a finitely-connected domain with a rectifiable boundary.  Note that otherwise, if $P(K) = \infty$, the left hand side of the inequality is trivial.  We sometimes denote the right hand side by $R_{vol}(K),$ which is the radius of the disk having the same area as $K$. The upper bound of \eqref{geo} was obtained by H. Alexander in \cite{Al} and independently by D. Khavinson in
\cite{Kh,Kh84}.  The lower bound is from \cite{Kh,Kh84}.  Note that \eqref{geo} implies the isoperimetric inequality $P(K)^2 \geq 4 \pi Area(K)$.  The most elementary proof of \eqref{geo} is obtained in \cite{GaKh}, where it is also noted that $\lambda(K)$ attains its upper bound if and only if $K$ is a disk, modulo a ``negligeable" set, that is, a compact set $E$ for which $R(E) = C(E).$  Since in this paper, we will only concern ourselves with domains with reasonable boundaries, we can safely say from now on that $\lambda(K) = R_{vol}(K)$ if and only if $K$ is a disk. The question that was raised in \cite{K87} is then natural:

\begin{question}
    For which sets $K$ is $\lambda(K) = 2 Area(K)/P(K)$?
\end{question}

For the rest of this paper, we will only consider a set $K$ that is the closure of a finitely connected domain $\Omega$ with a real analytic boundary $\Gamma = \cup_{k=1}^n \Gamma_k$, and we write $\lambda:=\lambda(\Omega) : = \lambda(K)$.  The following conjecture goes back to \cite{Kh84,K87}:

\begin{conjecture}\la{Main}
    We have $\lambda(\Omega) = 2 Area(\Omega)/P(\Gamma) $ if and only if $\Omega$ is a disk or an annulus.
\end{conjecture}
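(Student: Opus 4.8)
The ``if'' direction is a direct computation squeezed between the two bounds of \eqref{geo}. For a disk of radius $\rho$ one has $2\,Area/P=\rho=\sqrt{Area/\pi}=R_{vol}$, so $\lambda$ must equal this common value. For the annulus $\{r<|z|<R\}$ the function $\phi_0=rR/z$ lies in $R(K)$ (its only pole is at $0\notin K$), and writing $z=\rho e^{i\theta}$ gives $|\bar z-rR/z|=|\rho-rR/\rho|$, whose maximum over $r\le\rho\le R$ is $R-r$; since the lower bound already gives $\lambda\ge 2\,Area/P=R-r$, we conclude $\lambda=R-r=2\,Area/P$. The substance of the theorem is the converse.

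For the converse I would first extract the duality information hidden in the lower bound. For \emph{any} $\phi\in R(K)$, Cauchy's theorem gives $\oint_\Gamma\phi\,dz=0$ while $\oint_\Gamma\bar z\,dz=2i\,Area(\Omega)$ for the orientation of Figure~\ref{fig1}, so
\[
2\,Area(\Omega)=\Big|\oint_\Gamma(\bar z-\phi)\,dz\Big|\le\Big(\sup_\Gamma|\bar z-\phi|\Big)\,P(\Gamma).
\]
Taking the best approximant $\phi$ (whose existence, and \emph{analyticity up to $\Gamma$}, must be established first --- this real-analytic free-boundary regularity is a genuine preliminary step) and assuming $\lambda=2\,Area/P$, every inequality is forced to be an equality. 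Equality in the triangle inequality for the integral forces $(\bar z-\phi)\,dz$ to have constant argument along $\Gamma$, and equality in the modulus estimate forces $|\bar z-\phi|\equiv\lambda$ on $\Gamma$. Hence $(\bar z-\phi)\,\tfrac{dz}{ds}\equiv c$ with $|c|=\lambda$, and integrating gives $c\,P=2i\,Area$, i.e. $c=i\lambda$ is \emph{purely imaginary}. Writing $N$ for the outward unit normal, this is the clean overdetermined condition $\bar z-\phi=\lambda\,\bar N$ on $\Gamma$, equivalently $(\bar z-\phi)N\equiv\lambda$.

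Next I would convert this into an intrinsic statement along two equivalent routes. (i) \emph{Schwarz function.} Since $\Gamma$ is real analytic it carries a Schwarz function $S$, analytic near $\Gamma$, with $\bar z=S(z)$ and $(dz/ds)^2=1/S'(z)$. The optimality condition squares to $(S-\phi)^2=-\lambda^2 S'$ near each $\Gamma_k$; this Riccati equation linearizes to $y''=-(\phi'/\lambda^2)\,y$ with $S=\phi+\lambda^2\,y'/y$, whose coefficient is \emph{analytic} on $\bar\Omega$, so the singularities of the continuation of $S$ into $\Omega$ are controlled by the zeros of $y$ and the exterior poles of $\phi$. Equivalently, $\Gamma$ consists of closed trajectories of the meromorphic quadratic differential $(\bar z-\phi)^2\,dz^2$ on the Schottky double of $\Omega$. (ii) \emph{Serrin form.} Writing $\phi=g'$ and $W=\tfrac12|z|^2-\re g$, so that $\bar z-\phi=2\,\partial W/\partial z$, the conditions become the overdetermined problem
\[
\Delta W=2\ \text{in }\Omega,\qquad W\big|_{\Gamma_k}=\text{const},\qquad \partial W/\partial N\equiv\lambda\ \text{on }\Gamma,
\]
with the \emph{same} Neumann constant $\lambda$ on every component --- precisely the multiply-connected Serrin/Garabedian problem of the abstract, whose rotationally symmetric solutions are the disk and the radial annulus $W=\tfrac12\rho^2-rR\ln\rho$.

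The crux is the global classification: showing these conditions force rotational symmetry and connectivity at most two. Here the standard tools must be adapted. Weinberger's $P$-function argument does not close directly, precisely because $W$ need not take the same constant on all of $\Gamma$ --- the very feature that lets the annulus survive --- and the Alexandrov moving-plane method must be engineered to respect several boundary components. The route I would pursue is the quadratic-differential one: the $\Gamma_k$ are disjoint closed trajectories of $(\bar z-\phi)^2\,dz^2$, hence bound disjoint ring domains of its trajectory structure, and Jenkins--Strebel theory bounds the number of such rings and the shape of their critical graph in terms of the poles of the differential, which through the linearizing ODE are tied to the poles of the rational function $\phi$. Pinning these down should force $\phi$ (after normalization) to be either $0$ or a single term $rR/z$, i.e. $\Omega$ a disk or an annulus. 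Simultaneously ruling out connectivity $n\ge 3$ and every non-symmetric $n=1,2$ configuration is the main obstacle and the heart of the argument.
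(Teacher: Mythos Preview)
Your outline is on the right track and in fact parallels the paper's own strategy: the extremality condition (ii) of Theorem~\ref{DK}, its Riccati/linearized form $v''+(\varphi'/\lambda^{2})v=0$, the interpretation of $\partial\Omega$ as closed trajectories of a real quadratic differential, and an appeal to Jenkins' structure theory to cap the connectivity at two. But you correctly flag the argument as incomplete, and the two places you leave open are exactly where the paper does the real work.

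For the reduction to $n\le 2$, the operative quadratic differential is not $(\bar z-\varphi)^{2}\,dz^{2}$ (only analytic in a collar of $\Gamma$, or on the double) but $\varphi'(z)\,dz^{2}$, holomorphic throughout $\overline\Omega$ and satisfying $\varphi'\,dz^{2}=(1+\lambda\kappa)\,ds^{2}$ on $\Gamma$. Jenkins' maximal-domain theorem then yields $n\le 2$ \emph{provided} $\varphi'$ has no zeros on $\partial\Omega$, so that no critical trajectory lands there. This non-vanishing is not automatic from your setup; the paper proves it (Lemma~\ref{n2}) by combining the sign condition $\oint_{\Gamma_k}(1+\lambda\kappa)\,ds>0$ (Theorem~\ref{function}) with the Liouville--Green/JWKB asymptotics of solutions to $v''=-(\varphi'/\lambda^{2})v$ near a putative boundary zero, showing that the known boundary solution $v_k=\exp\bigl(-i\alpha\lambda^{-1}\!\int u_k\bigr)$ cannot match the two-term asymptotic form unless $\varphi'$ is constant along an arc, hence identically zero. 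Your sketch does not isolate this step.

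For the doubly-connected case, ``pinning down the poles'' does not by itself force $\varphi=rR/z$. The paper first shows (Lemma~\ref{l1}) that $\varphi'(z)=C\,[(\log h)']^{2}$ for the conformal map $h:\Omega\to A$, then (Lemma~\ref{l2}) that the induced boundary diffeomorphism $\mu=h^{-1}\bigl(\tfrac{R_1}{R_2}h\bigr)$ has vanishing Schwarzian and is therefore M\"obius, and finally (Lemma~\ref{l3}) that a conformal map $f:A\to\Omega$ intertwining a dilation with a M\"obius map must satisfy $\mathcal{S}f=c/z^{2}$, whose only univalent solutions on an annulus are $z\mapsto az^{\pm 1}+b$. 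This Schwarzian rigidity is the actual mechanism that closes the $n=2$ case; your proposal does not supply an alternative.
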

For simply-connected $\Omega$, the extremal domains are known to be only disks.  This was proved in \cite{K87}.  Conjecture \ref{Main} can be expressed in several equivalent forms, as follows.

\begin{thm}[\cite{K87,GuKh}]\la{DK}
Let $\Omega$ and $\Gamma$ be as above.  The following are
equivalent:

(i) $\lambda=\frac{2Area(\Omega)}{P(\Gamma)};$

(ii) There is $\varphi$ analytic in $\overline{\Omega}$ such that
$\bar{z}(s)-i\lambda\frac{d\bar{z}}{ds}=\varphi(z(s))$ on $\Gamma,$ where
$s$ is the arc-length parameter;

(iii) The following quadrature identity
\begin{equation} \la{quadrature}
    \frac{1}{Area(\Omega)}\int_{\Omega} f dA = \frac{1}{P(\Gamma)}\int_{\Gamma} f ds
\end{equation}
holds for all bounded analytic functions $f$ in $\Omega,$ where $dA$ denotes area measure in $\CC$.

(iv) There exist constants $c_1, c_2, \ldots, c_n \in \RR$ such that the overdetermined boundary value problem
\begin{equation}\la{bvp}
    \begin{cases}
        \Delta u =1 \,\, in \,\, \Omega \\
       \frac{\partial u}{\partial n}= \frac{Area(\Omega)}{P(\Gamma)} \,\, on \,\, \Gamma \\
       u\mid_{\Gamma_k} = c_k \,\, k = 1, 2, \ldots, n
    \end{cases}
\end{equation}
has a smooth solution in $\Omega$.  (Here, $\frac{\partial }{\partial n}$ denotes the exterior normal derivative on $\Gamma$.)
\end{thm}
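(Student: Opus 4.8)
The plan is to treat the analytic function $\varphi$ of (ii) as the central object linking all four conditions, proving the equivalences via (ii) $\Leftrightarrow$ (i), (ii) $\Leftrightarrow$ (iii), and (iii) $\Leftrightarrow$ (iv). The engine behind everything is the elementary inequality that already produced the lower bound in \eqref{geo}: for any $\phi \in R(K)$, Green's theorem gives $\int_\Gamma \bar z \, dz = 2i\, Area(\Omega)$ while Cauchy's theorem gives $\int_\Gamma \phi\, dz = 0$, so that
\[
2\,Area(\Omega) = \left| \int_\Gamma (\bar z - \phi)\, dz \right| \le \int_\Gamma |\bar z - \phi|\, ds \le \| \bar z - \phi \|_{C(K)}\, P(\Gamma).
\]
Tracking the two equality signs drives (i) $\Leftrightarrow$ (ii). If (ii) holds, then on $\Gamma$ one has $(\bar z - \varphi)\tfrac{dz}{ds} = i\lambda\tfrac{d\bar z}{ds}\tfrac{dz}{ds} = i\lambda$, so both inequalities become equalities and $\lambda P(\Gamma) = 2\,Area(\Omega)$, giving (i) in one line. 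Conversely, equality in the second inequality forces $|\bar z - \varphi| \equiv \lambda$ on $\Gamma$, and equality in the first forces $(\bar z - \varphi)\tfrac{dz}{ds}$ to have constant argument; the constant is pinned to $i\lambda$ by $\int_\Gamma \bar z\, dz = 2i\,Area(\Omega)$, which is exactly $\bar z - i\lambda\tfrac{d\bar z}{ds} = \varphi$. The one genuinely nontrivial input here is the existence of an extremal $\varphi$ attaining the infimum in Definition \ref{ac} and lying in the right class; I would obtain it from weak-$*$ compactness of the unit ball of annihilating measures (the dual extremal problem), and then use real-analyticity of $\Gamma$ to promote the boundary identity to analyticity of $\varphi$ on $\overline\Omega$.

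For (ii) $\Rightarrow$ (iii) I would run the same two theorems against a test function: for $f$ bounded analytic, using $\bar z - \varphi = i\lambda\, d\bar z/ds$ on $\Gamma$ and $|dz/ds| = 1$,
\[
i\lambda \int_\Gamma f\, ds = \int_\Gamma f(\bar z - \varphi)\, dz = \int_\Gamma f\bar z\, dz = 2i\int_\Omega f\, dA,
\]
where the middle step drops $\int_\Gamma f\varphi\, dz = 0$ by Cauchy and the last is Green's theorem applied to $\partial_{\bar z}(f\bar z) = f$. Substituting $\lambda = 2\,Area(\Omega)/P(\Gamma)$ from (i) yields \eqref{quadrature}. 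For the converse I would reverse this: setting $\psi := \bar z - i\lambda_0\, d\bar z/ds$ with $\lambda_0 := 2\,Area(\Omega)/P(\Gamma)$, the identity shows $\int_\Gamma \psi g\, dz = 0$ for every analytic $g$, so the Cauchy transform $\tfrac{1}{2\pi i}\int_\Gamma \psi(\zeta)(\zeta - z)^{-1}\, d\zeta$ vanishes off $\overline\Omega$ and, via the Plemelj jump relations, realizes $\psi$ as the boundary value of a function $\varphi$ analytic in $\Omega$ (and across $\Gamma$, by real-analyticity). That this $\varphi$ is extremal, so that $\lambda = \lambda_0$ and (i) holds, follows from the key observation that $|\bar z - \varphi|^2$ is subharmonic --- indeed $\Delta |\bar z - \varphi|^2 = 4(1 + |\varphi'|^2) > 0$ --- whence its maximum over $\overline\Omega$ is attained on $\Gamma$, where it equals $\lambda_0^2$; thus $\| \bar z - \varphi \|_{C(K)} = \lambda_0$ and $\lambda \le \lambda_0$, matching the lower bound.

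For (iv) $\Leftrightarrow$ (iii) I would pass through the potential $u$. The direction (iv) $\Rightarrow$ (iii) is Green's second identity: for $f$ analytic, $\Delta f = 0$ and $\Delta u = 1$ give
\[
-\int_\Omega f\, dA = \int_\Gamma u\, \partial_n f\, ds - \int_\Gamma f\, \partial_n u\, ds;
\]
the first boundary term is $\sum_k c_k \int_{\Gamma_k} \partial_n f\, ds = 0$ because the flux of a single-valued analytic function vanishes over each closed contour ($\partial_n f\, ds = -i\, df$ integrates to zero around $\Gamma_k$), and the second is $(Area(\Omega)/P(\Gamma))\int_\Gamma f\, ds$, which is precisely \eqref{quadrature}. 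Note that the constancy $u|_{\Gamma_k} = c_k$ is exactly what lets the first term collapse. For (iii) $\Rightarrow$ (iv) I would first solve the Neumann problem $\Delta u = 1$, $\partial_n u = Area(\Omega)/P(\Gamma)$, whose compatibility condition $\int_\Gamma \partial_n u\, ds = Area(\Omega) = \int_\Omega \Delta u\, dA$ holds automatically; then, writing $4\partial_z u = \bar z + \Phi$ with $\Phi$ analytic, I would feed in the already-established identity (ii) to show the tangential derivative $\partial_s u$ vanishes on $\Gamma$, forcing $u$ to be constant on each component $\Gamma_k$.

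I expect the main obstacle to be concentrated in the multiply-connected features of the problem. First, producing the extremal $\varphi$ and justifying its analyticity up to and across the real-analytic boundary (rather than mere membership in $R(K)$) requires the duality/compactness argument combined with a reflection/regularity step. Second, and more seriously, the passage (iii) $\Rightarrow$ (iv) hinges on showing that the Neumann potential $u$ is genuinely \emph{constant} on each contour $\Gamma_k$: this is where the constants $c_1, \dots, c_n$ and the nontrivial topology enter, and where the single-valuedness of the analytic objects --- equivalently, the vanishing of the periods $\oint_{\Gamma_k} \varphi\, dz$ --- must be verified rather than assumed.
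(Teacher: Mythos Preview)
The paper does not prove Theorem~\ref{DK}: it is stated with attribution to \cite{K87,GuKh} and then used as a known tool in Sections~\ref{reduction}--\ref{DP}. There is therefore no in-paper argument against which to compare your proposal.

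That said, your outline is essentially the standard one behind those references and is sound. The inequality chain you write is exactly how the lower bound in \eqref{geo} is obtained (cf.\ \cite{GaKh,Kh84}), and tracking its equality cases is precisely the mechanism for (i)$\Leftrightarrow$(ii). Your computations for (ii)$\Rightarrow$(iii) and (iv)$\Rightarrow$(iii) are correct; in particular the identity $\partial_n f\,ds=-i\,df$ for analytic $f$, and hence $\oint_{\Gamma_k}\partial_n f\,ds=0$, is exactly right. The subharmonicity of $|\bar z-\varphi|^2$ to force $\lambda=\lambda_0$ in (iii)$\Rightarrow$(ii)$\Rightarrow$(i) is also the correct device.

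You have correctly located the only delicate point: in (iii)$\Rightarrow$(iv), the single-valuedness of $u$ and the constancy $u|_{\Gamma_k}=c_k$ require controlling the periods $\oint_{\Gamma_k}\varphi\,dz$ in the multiply-connected case. This is not an obstacle once (ii) is in hand: since $\tfrac{d\bar z}{ds}\,dz=ds$ on $\Gamma$, one has
\[
\oint_{\Gamma_k}\varphi\,dz=\oint_{\Gamma_k}\bar z\,dz-i\lambda\oint_{\Gamma_k}ds
= \pm 2i\,\mathrm{Area}(\Omega_k^{\,c}\,\text{or hole}) - i\lambda L_k \in i\RR,
\]
so all periods of $\varphi$ are purely imaginary, $\mathrm{Re}\!\int^z\varphi$ is single-valued in $\Omega$, and $u:=\tfrac14|z|^2-\tfrac12\,\mathrm{Re}\!\int^z\varphi$ solves \eqref{bvp}. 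Indeed $4\partial_z u=\bar z-\varphi$, whence on $\Gamma$ one gets $\nabla u=\overline{2\partial_z u}=\tfrac12(z-\bar\varphi)=\tfrac{\lambda}{2}\,n$, giving $\partial_n u=\lambda/2=Area(\Omega)/P(\Gamma)$ and $\partial_s u=0$ simultaneously. This closes the loop you flagged without a separate Neumann-problem step.
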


In view of the fact that \eqref{ac} yielded a new proof of the isoperimetric inequality and because of the quadrature identity \eqref{quadrature}, Conjecture \ref{Main} and its ramifications have attracted the attention of a number of mathematicians (see \cite{BeKh,KSV} and references therein). Condition \textit{(iv)} in the above theorem allows one to connect problems involving analytic content to a class of free boundary problems known as J. Serrin's problem (see, e.g., \cite{Se,We,BeKh,GuKh,KSV}).

The rest of the paper is organized as follows.  In Section \ref{phys}, we discuss four physical free boundary problems associated with Conjecture \ref{Main}.  We then turn to a proof of Conjecture \ref{Main}:  in Section \ref{reduction}, we reduce the problem to domains of connectivity at most 2, and in Section \ref{DP}, we prove the conjecture for doubly-connected domains.  We close with some final remarks and open questions.

\textbf{Acknowledgements.} The work on this paper began during the special semester on ``Complex Analysis and Integrable Systems" at the Mittag-Leffler Institute.  The authors would like to thank the Institute and the organizers for their support. Ar. A. is also very grateful for the warm hospitality of the INSPIRE
group in Johannes Gutenberg-Universit{\"a}t, Mainz, Germany and its
supporting staff. D. K. was partially supported by NSF grant DMS-0855597.

\section{{{Four}} physical problems}\la{phys}
In this section, we consider {\rt{four}} physical problems related to Conjecture 1, {\rt{two}} concerning Newtonian fluid flow, and the other {\rt{two concerning classical and quantum electrically-charged liquids in two dimensions}}.

\subsection{Serrin's Problem}

J. Serrin's problem concerns a laminary flow of a viscous, Newtonian fluid in a pipe with cross-section $\Omega$. (For an expository description of this problem, see, for example, \cite{BeKh}.)  Let the $z$ axis be directed along the length of the pipe while $(x,y)$ represent the coordinates in the pipe's cross-section. {\rt{Neglecting gravity, in the limit of viscous flows with low Reynolds numbers, the Navier-Stokes equations reduce to the Stokes equations, pressure can be taken to be a linear function of $z$, and velocity has only one non-vanishing component, $\vec{v} =(0, 0, u)$.}} As {\rt{the flow is laminar}}, the velocity $u$ of the fluid particle is the same along every streamline, that is, $ u = u(x,y)$ does not depend on $z$.  Since the rate of change of pressure $p$ along the pipe is constant $\frac{dp}{dz} = C,$ the {\rt{Stokes}} equations reduced to this situation yield that $-\Delta u = \nu^{-1}\frac{dp}{dz} = C/\nu$, {\rt{where $\nu$ is the coefficient of dynamic viscosity}}.  The tangential stress of the viscous fluid on the pipe walls is proportional to the normal derivative of the velocity.  One can imagine that {\rt{along}} the pipe walls (the boundary components of $\Gamma$), {\rt{the fluid is}}  either at rest ($u = 0$ on $\Gamma$, {\rt{or no-slip condition}}) or moving with perhaps different velocities ($u = c_k$ on $\Gamma_k$).  For the simply-connected case ($n = 1$, $c_1 = 0$), Serrin (\cite{Se}) proved that if the tangential stress on the pipe is constant, then $\Omega$ is a disk.  This, as noted in \cite{K87}, proves Conjecture \ref{Main} under the additional assumption that $\Omega$ is simply-connected: in that case, $\Omega$ must be a disk. An independent proof of Serrin's theorem for $\RR^2$ that is based solely on the use of (ii) in Theorem \ref{DK} is due to Gustafsson and can be found in \cite{K87}, or in \cite{BeKh}.  Various partial cases of Conjecture \ref{Main} in the form of \textit{(iv)} in Theorem \ref{DK}, with assumptions on the constants $c_k, \, k = 1, \ldots, n,$ were treated by many authors (see the references in \cite{BeKh,KSV}). The physical requirement corresponding to the third equality in \eqref{bvp} is that the ``drag" force on the pipe is constant along the perimeter.  Most extra assumptions are reduced to having $c_1$ be the largest of all the constants so that the Serrin-Alexandrov moving plane method can be applied, yielding spherical symmetry of $\Omega$.  This forces $\Omega$ to be a spherical shell.  (See \cite{GuKh, BeKh} for multi-dimensional analogues of conditions \textit{(i)} through \textit{(iii)} of Theorem \ref{DK} and relevant discussions.)  Yet, without additional assumptions on the boundary values $c_k, \, k = 1, \ldots, n,$ of $u$ on $\Gamma$ (as in (iv) of Theorem \ref{DK}), Conjecture \ref{Main} remained open.

\subsection{The shape of an electrified droplet}

If we consider a droplet of perfectly conducting fluid in the plane, with given electrostatic potential $\Phi$, there are three forces acting on the free boundary of the droplet:  the electrostatic force $\vec{F}_{el}$ and the force due to pressure $\vec{F}_{pr}$, both trying to tear the droplet apart, and the force due to surface tension, $\vec{F}_{sf}$, trying to keep the droplet together.  Let us sketch a derivation of the equation for the free boundary $\Gamma$ of the droplet in equilibrium.  See \cite{Ga,BeKh,KSV} for more details and references.

The {\rt{equilibrium}} electrostatic force acting on a piece of the boundary of $\Gamma$ {\rt{of infinitesimal length $ds$}} is $\vec{F}_{el} \sim |\vec{E}|^2 \vec{n} ds,$ where $s$ is arc-length, $\vec{n}$ is the outward unit normal vector, and $\vec{E} = \nabla U$ is the electrostatic field {\rt{(since the linear charge density in equilibrium is proportional to the normal component of the electrostatic field, $\frac{d \rho}{ds} \sim \vec{n} \cdot \vec{E} = |\vec{E}|$).}}  Here the harmonic function $U$ is the {\rt{ electrostatic}} potential.  Set $\Phi = U + i V$ to be the analytic potential corresponding to $U$.  Then $$\vec{E} = 2 \frac{\partial}{\partial \bar{z}} \left( \frac{\Phi + \bar{\Phi}}{2} \right) = \overline{\frac{\partial \Phi}{\partial z}}.$$  Here,
$\frac{\partial}{\partial \bar{z}} = \frac{1}{2} \left( \frac{\partial}{\partial x} + i \frac{\partial}{\partial y}\right)$ so that $\nabla U = 2 \frac{\partial U}{\partial \bar{z}}$. Assuming $\Gamma$ to be real-analytic, it can be parametrized by its Schwarz function $S$, analytic in a  neighborhood of $\Gamma$ so that $\Gamma = \{ z : \bar{z} = S(z) \}$ (see \cite{Da,Sh}).  Then, since
$ 1 = \frac{d\bar{z}}{ds} \frac{dz}{ds} = S'(z) \left( \frac{dz}{ds}\right)^2,$  the normal $\vec{n} = - i \frac{dz}{|dz|} = - i /\sqrt{S'(z)}$.  Thus,
$$ \vec{F}_{el} \sim \frac{-i}{\sqrt{S'(z)}} |\partial \Phi|^2 ds.$$

Moreover, the electric field $$\vec{E} = \overline{\partial \Phi} = |\partial \Phi| \left( - \frac{i}{\sqrt{S'}} \right).$$ Hence,
$$|\partial \Phi| = i \sqrt{S'} \, \overline{\partial \Phi} = - i /\sqrt{S'} \, \partial \Phi$$ as $|S'| = 1$ on $\Gamma$.  Thus
\begin{equation}\la{el}
  \vec{F}_{el} \sim \frac{-i}{\sqrt{S'}}  \left( - \frac{1}{S'} \right) \left( \partial \Phi \right)^2 ds = \frac{i}{(S')^{3/2}} \left( \partial \Phi \right)^2 ds.
\end{equation}

Now, the surface tension is proportional to the curvature, that is, $\vec{F_{sf}} \sim \frac{d \tau}{ds} ds$, where
$\tau = \frac{dz}{|dz|} = \frac{dz}{ds} = \frac{1}{\sqrt{S'}}$ is the unit tangent vector to $\Gamma$. Since $\frac{d}{ds} = \frac{1}{\sqrt{S'}}\frac{d}{dz}$, we arrive at
\begin{equation}\la{sf}
\vec{F}_{sf} \sim \frac{ds}{\sqrt{S'}} \frac{d}{dz} \left( \frac{1}{\sqrt{S'}}\right).
\end{equation}
Now the force due to pressure, we simplify to be
\begin{equation}\la{pr}
\vec{F_{pr}} \sim \vec{n} ds = - \frac{i}{\sqrt{S'}} ds.
\end{equation}
If the droplet is in equilibrium, the sum of the forces \eqref{el},\eqref{sf}, \eqref{pr} must be 0, and we obtain
\begin{equation*}
    c_1 \left(\frac{\partial \Phi}{dz} \right)^2 \frac{i}{(S')^{3/2}}  ds + c_2 \frac{1}{\sqrt{S'}} \frac{d}{dz} \left( \frac{1}{\sqrt{S'}}\right) ds + c_3  \frac{- i}{\sqrt{S'}} ds = 0,
\end{equation*}
where $c_1,c_2,$ and $c_3$ are real constants. Equivalently,
$$ c_1 \left( \frac{\partial \Phi}{dz} \right)^2   -i  c_2 S' \frac{d}{dz} \left( \frac{1}{\sqrt{S'}}\right)  - c_3  S'  = 0.$$ Noticing that
$S' \frac{d}{dz} \left( \frac{1}{\sqrt{S'}}\right) = - \frac{1}{2 \sqrt{S'}} \frac{dS'}{dz}$ while
$ \frac{d \sqrt{S'}}{dz}  = \frac{1}{2 \sqrt{S'}} \frac{dS'}{dz},$ we obtain
\begin{equation}\la{forcesum}
    c_1 \left( \frac{\partial \Phi}{dz} \right)^2   +i  c_2 \frac{d}{dz} \left( \sqrt{S'}\right)  - c_3  S'  = 0.
\end{equation}
Now define $F(z) = c_1 \int \left( \frac{\partial \Phi}{dz} \right)^2 dz.$ Then \eqref{forcesum} becomes, after integration,
\begin{equation}\la{intforcesum}
    F(z) + i c_2 \sqrt{S'(z)} - c_3 S(z) = 0.
\end{equation}
Dividing by $c_3$, and renaming $F(z) / c_3 = \varphi (z)$ and $c_2/c_3 = \lambda,$ and using the fact that $S(z) = \bar{z}$ on $\Gamma$ and $\sqrt{S'(z)} = \frac{d \bar{z}}{ds}$, we arrive at
\begin{equation}\la{DK2}
     \bar{z}(s) - i \lambda \frac{d \bar{z}}{ds} = \varphi(z),
\end{equation}
which is precisely the equation $(ii)$ given in Theorem \ref{DK}.

Several remarks are in order.

(i) If the potential $U$ has a point charge at $z_0$, then $\varphi(z) \sim \frac{const}{z - z_0}$ near $z_0$, that is, $\varphi$ has a pole.

(ii) Usually, for a physical droplet, the fluid is assumed to be incompressible.  Then either the area is assumed to be fixed, or the area, the pressure, and the temperature are connected by the ``equation of state".  In particular, for an incompressible fluid, the pressure has to be adjusted each time the area is fixed.  If we amend the problem with this requirement, the physical picture is the following.  Consider a plane with a system of charges on it.  We throw a droplet of fluid onto the plane and see where it will come to rest and what shape it will have.  For example, if there is only one charge, this charge will induce a dipole moment on the droplet, and the dipole will move to ``swallow" the charge.  Then, there will be no charge outside, and the charge inside will redistribute itself over the surface, while at $\infty$ we still have $\varphi \sim \frac{c}{z}$. Thus, as in example (i) with $z_0=0$, \eqref{DK2} would become, for some constant $c \in \RR$,
\begin{equation*}
    \bar{z}(s) - i \lambda \frac{d \bar{z}}{ds} = \frac{c}{z},
\end{equation*}
or
\begin{equation}\la{dipole}
    S(z) - i \lambda \sqrt{S'(z)} = \frac{c}{z}.
\end{equation}
Denoting $u(z) = \sqrt{S'(z)}$ and differentiating with respect to $z$, we reduce \eqref{dipole} to
$$ u^2 - i \lambda u ' = - \frac{c}{z^2},$$
the Ricatti equation, and the unique solution $u = const/z$ is easily found.  This implies that $ S' = const/z$ and $\Gamma $ is a circle centered at the origin.  Note that a ``physical" solution yields the same result without any calculation, merely by noticing that $const/z$ is radially symmetric ($U = \log |z|$), and therefore the problem must have a radially symmetric solution, hence, a circle. The Ricatti equation plays a crucial role in the proof of Conjecture \ref{Main} in subsequent sections, see also \cite{K87}.

(iii) Let us look again at \eqref{DK2}, where $\varphi(z) = const \int \left( \frac{\partial \Phi}{dz} \right)^2 dz,$ and $\Phi = U + i V$ is the analytic potential.  Then  \eqref{DK2} enforces an extra condition on the problem, namely that $\sqrt{\varphi'}$ is a single-valued function.  In general, if $U(z) = \int_{\CC} \log|z - \zeta| d \mu(\zeta)$ is an arbitrary potential of a charge distribution $\mu$, then
$$ \sqrt{\varphi'} = const \frac{\partial \Phi}{\partial z} = \int \frac{d \mu(\zeta)}{\zeta - z},$$
a single-valued function.  We will call the solution to the problem \eqref{DK2} with $\sqrt{\varphi'}$ single-valued a physical droplet versus a mathematical droplet if not (see the discussion in \cite{KSV}).

(iv) Note that the free boundary problem \eqref{DK2} is extremely restrictive.   As was already noted in \cite{K87}, if the free boundary $\Gamma$ contains a circular arc, then the extremal domain must either be a disk (of radius $\lambda$) or an annulus.  Indeed, if say $\Gamma$ contains a circular arc centered at the origin of radius $R$, then \eqref{DK2} implies that either $\varphi \equiv 0$ if $\lambda = R$ and $\Gamma = \{ z: |z| = R\},$ or $\varphi = const/z$, so that every connected component of $\Gamma$ is a circle centered at the origin, and therefore $\Omega$ must be an annulus.

(v) Finally, we mention that a slightly more general free boundary problem
\begin{equation}\la{fbp}
    p \bar{z} - i t \frac{d \bar{z}}{ds} = F(z),
\end{equation}
where $F$ is a given analytic or meromorphic function and $t$ is a real parameter, was discussed in \cite{KSV} in detail.  In particular, choosing $p = 0$ and $F$ analytic in $\CC - \Omega$  (the complement of the droplet) with a simple pole at infinity gives rise to an interesting family of non-circular algebraic droplets depending on the value of the parameter $t$ (see \cite{KSV} for details).  The easier version of the latter problem with $F$ analytic in $\overline{\CC} - \Omega$ (including infinity) was considered in \cite{EKS} in connection with the study of the first eigenvalue of the spectrum of the single layer potential.

\subsection{Incompressible flows in 2D and generalized Rankine vortices}

Incompressible flow dynamics in two dimensions with non-vanishing vorticity have a distinguished history \cite{G_Stokes,Helmholtz,Lamb}, with some important open problems relevant to the field-theoretic extension applicable to Quantum Hall systems and other 2D strongly-interacting quantum electronic systems \cite{Wiegmann}. We briefly review here the connections between this class of problems and the isoperimetric inequality, referring the reader to \cite{RT2014} for a more in-depth  quantum-field theoretic discussion.

\subsubsection{Classical 2D incompressible vortex flows} \la{above}

In 2D classical incompressible flows, the problem of equilibrium distribution of vorticity is particularly relevant because of its connection to the onset of turbulence (hence, to regularity of solutions for the Navier-Stokes equations).  Incompressible 2D velocity fields $\vec{v}$ can be expressed in complex notation as $\vec{v} = v_x + i v_y = 2 i \bar{\p} \psi$, where the stream function $\psi(x, y)$  is real-valued, solving the Poisson equation $\nabla^2 \psi = \omega(x, y)$, and $\omega(x, y) = \p_x v_y - \p_y v_x$ is the 2D vorticity field of the flow. Flow incompressibility follows directly from
\be
i \nabla^2 \psi = 2 \p (v_x + i v_y) = \vec{\nabla}\cdot \vec{v} + i \omega \in i \mathbb{R}.
\ee
This formulation is useful because it allows to express time-independent solutions to incompressible flows in 2D entirely via the stream function $\psi(x, y)$. For example, irrotational,  incompressible flows are equivalent to boundary-value problems for the Laplace operator, since $\psi$ is a harmonic function in the domain of irrotational flow.  In general, the problem requires finding the equilibrium distribution of the vorticity field, decomposable as the sum of an absolutely-continuous part and a singular part.

In this formulation, the problem discussed in this paper requires finding a bounded domain $\Omega$ of connectivity $n \ge 1$, and a stream function $\psi(x, y)$
\be
\psi(x, y) = |z|^2 - 2 \Re \int \varphi(z) dz,
\ee
such that $\omega(x, y) = \nabla^2 \psi = $ constant in $\Omega$, and the velocity field $\vec{v} = 2i\bar{\p} \psi = 2i(z-\overline{\varphi}(z)) =  2\lambda \tau$ on $\p \Omega$, that is the boundary of $\Omega$ consists of streamlines with constant (tangent) velocity $|\vec{v}| = \lambda$, where $\tau$ represents the unit tangent vector, $\lambda$ is the analytic content, and $\varphi$ is the best approximation to $\bar{z}$, as in Theorem~\ref{DK}.  The singularity set where $\varphi$ is not analytic (inside the complement of $\overline{\Omega}$) will correspond to the singular distribution of vorticity, while inside $\Omega$ vorticity is constant, $\omega(x, y) = 4$. Applying Green's theorem to the vector field $\vec{v}$ on $\Omega$ leads to the expected identity
\be \la{ver}
4 {\rm{Area}}(\Omega) =  \int_{\Omega} \omega dx dy = \oint_{\p \Omega} \vec{v} \cdot \tau ds = 2 \lambda P.
\ee

It is instructive to notice that the simply-connected case ($n =1$) was shown long ago to correspond to a disk domain, and the associated vorticity distribution is known as the Rankine vortex \cite{Rankine}.

\subsubsection{Chiral fields in conformal theories with several boundary components} \la{quantum}

Not surprisingly, the 2D equilibrium distribution vorticity problem described in \S\ref{above} has a magneto-static counterpart, in which we require finding a domain $\Omega$ such that the total magnetic field is oriented along the direction perpendicular to the $(x,y)$ plane, and whose intensity $B(x, y)$ is constant in $\Omega$ (more precisely, we can take $B = 4$ in $\Omega$ to make explicit the analogy with the vorticity field from \S\ref{above}), as well as having singularities in the complement $\overline{\Omega}^c$, corresponding to infinitely-narrow magnetic flux tubes, and given by the singularity set of $\varphi(z)$.

The time-independent vector potential $\vec{A}(x, y) = (A_x, A_y, 0)$ is fixed by the gauge condition $\vec{\nabla}\cdot \vec{A} = 0$, so that
\be
2{\p} (A_x + i A_y) = \vec{\nabla}\cdot \vec{A} + i B(x, y).
\ee
Just as in \S\ref{above}, the choice $A_x + iA_y = 2i(z - \bar{\varphi}(z))$ solves all the constraints, with the additional requirement that $\vec{A} = \lambda \tau$ on $\p \Omega$, where again $\lambda, \tau$ represent the analytic content, and the tangent unit vector, respectively. This means that the boundary components of $\Omega$ can be identified with   closed loops of electrical current, and the vector potential has constant magnitude on $\p \Omega, |\vec{A}| = \lambda$. Green's theorem for the field $\vec{A}$ provides again the expected identity \eqref{ver}, in fact the condition  (i) of Theorem~\ref{DK}.

The classical vortex flow problem has a quantum correspondent \cite{RT2014}, related to open problems in conformal field theory (CFT). It is a boundary CFT problem requiring finding    a domain $\Omega$ (as indicated above), with (holomorphic) energy-momentum tensor density $T_{zz} = \varphi'(z)$. Since $\varphi(z)$ is analytic in $\Omega$,  $T_{zz} dz^2 = \varphi'(z) dz^2$ must be a quadratic differential in $\Omega$ (as will be indeed discussed in the next section). The chiral fields $v_k(z) = exp[-i\lambda^{-1} \int^z u_k(\zeta) d\zeta],  z, \zeta \in \Gamma_k$, analytically continued into $\Omega$, satisfy the projective connection \cite{Frenkel} null condition
\be
\left [ \frac{d^2 \,\,}{d z^2} + \frac{\varphi'(z)}{\lambda^2} \right ] v_k = 0, \quad z \in \Omega,
\ee
while the gauge fields $u_k(z)$ (which reduce to the vector potential fields $\vec{A}$ on $\p \Omega$) are consistently related to the energy-momentum tensor $T_{zz}$ via the covariant derivative (or momentum) constraint
\be
\nabla_{u}(u) = \left ( \frac{d}{dz} - \frac{i}{\lambda} u \right ) u = \frac{i}{\lambda} \varphi'(z), \quad z \in \overline{\Omega},
\ee
which is equivalent to differentiating (ii) in Theorem~\ref{DK} with respect to $z$.

\section{Reduction to the doubly-connected case} \la{reduction}

Let us now turn to a proof of Conjecture 1.  Assume $\Omega$ is a finitely-connected extremal domain, that is, a domain such that
$\lambda(\Omega) = \frac{2 \, Area(\Omega)}{P(\p \Omega)}$,
with boundary components $\Gamma_k, k = 1, 2, \ldots, n$ with $n \geq 2$. In this section, we will show that $\Omega$ must be doubly-connected ($n = 2$).

Denote by $\{ \Omega_k \}_{k=1}^n$ the domains defined by
$\Omega_k \cap \Omega = \emptyset,  \,\, \partial \Omega_k = \Gamma_k,$
and choose $\Omega_1$ for the one which is unbounded.  Recall that $\tau = \frac{dz}{ds}$ is the unit tangent vector at $z \in \Gamma$, and
define $\kappa$ to be the signed curvature at $z \in \Gamma,$ that is
$$ \kappa = -i \tau \cdot \frac{d\bar{\tau}}{ds} = - i \frac{\frac{d^2\bar{z}}{ds^2}}{\frac{d \bar{z}}{ds}}.$$ Notice that $\kappa$ is real. We then have the following.
\begin{thm} \la{function}
Let $\Omega$ be an extremal domain, let $\varphi$ be the best approximation of $\bar{z}$, and let $\lambda$ be the analytic content of $\Omega$.  Then  $\varphi'(z)dz^2$ is a quadratic differential that is real-valued on $\p \Omega$, and
\be
\varphi'(z) dz^2 = (1+ \lambda \kappa) ds^2
\ee
along each component $\Gamma_k$ of $\p \Omega.$
Moreover, on every component $\Gamma_k$ of $\p \Omega$,
$
\oint_{\Gamma_k} (1 +  \lambda \kappa) ds > 0.
$
\end{thm}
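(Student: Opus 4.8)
The plan is to read the identity straight off part (ii) of Theorem~\ref{DK} by differentiating in arc length, and then to split the curvature statement into a trivial part (the interior contours) and the one genuinely geometric estimate (the outer contour). First I would differentiate the extremality relation $\bar z(s) - i\lambda\, d\bar z/ds = \varphi(z(s))$ of Theorem~\ref{DK}(ii) with respect to $s$, using $\frac{d}{ds} = \frac{dz}{ds}\frac{d}{dz}$ on the right, to get $\frac{d\bar z}{ds} - i\lambda\frac{d^2\bar z}{ds^2} = \varphi'(z)\frac{dz}{ds}$. The definition of the signed curvature rearranges to $\frac{d^2\bar z}{ds^2} = i\kappa\frac{d\bar z}{ds}$, which collapses the left-hand side to $(1+\lambda\kappa)\frac{d\bar z}{ds}$; multiplying by $\frac{dz}{ds}$ and using $\frac{dz}{ds}\frac{d\bar z}{ds}=1$ then gives $\varphi'(z)\left(\frac{dz}{ds}\right)^2 = 1+\lambda\kappa$, that is, $\varphi'(z)\,dz^2 = (1+\lambda\kappa)\,ds^2$ along each $\Gamma_k$. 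Since $\varphi'$ is analytic in $\overline{\Omega}$ this is a quadratic differential, and since $\kappa,\lambda\in\RR$ its boundary values are real; this disposes of the first two assertions by direct calculation.

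For the integral I would write $\oint_{\Gamma_k}(1+\lambda\kappa)\,ds = P(\Gamma_k) + \lambda\oint_{\Gamma_k}\kappa\,ds$ and invoke the theorem of turning tangents: the total signed curvature of a simple closed curve is $\pm 2\pi$, the sign being fixed by the orientation together with our sign convention for $\kappa$. With the orientations of Figure~\ref{fig1}, each clockwise interior contour $\Gamma_k$ ($k\ge 2$) has $\oint_{\Gamma_k}\kappa\,ds = +2\pi$, so $\oint_{\Gamma_k}(1+\lambda\kappa)\,ds = P(\Gamma_k)+2\pi\lambda>0$ at once. The only substantive case is the counterclockwise outer contour, where $\oint_{\Gamma_1}\kappa\,ds = -2\pi$ and the integral equals $P(\Gamma_1) - 2\pi\lambda$.

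This outer contour is where the work lies, and the main obstacle is producing a \emph{strict} lower bound $P(\Gamma_1) > 2\pi\lambda$. I would let $D$ be the Jordan domain enclosed by $\Gamma_1$; since $n\ge 2$ forces at least one hole of positive area, $\mathrm{Area}(D) > \mathrm{Area}(\Omega)$ strictly. The isoperimetric inequality for $\Gamma_1$ gives $P(\Gamma_1)^2 \ge 4\pi\,\mathrm{Area}(D) > 4\pi\,\mathrm{Area}(\Omega)$, and the upper bound of \eqref{geo}, in the form $\mathrm{Area}(\Omega)=\pi R_{vol}^2 \ge \pi\lambda^2$, then yields $P(\Gamma_1)^2 > 4\pi^2\lambda^2$, i.e.\ $P(\Gamma_1) > 2\pi\lambda$. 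The point worth flagging is that this inequality is tight in the disk limit (where the outer integral vanishes), so the strictness cannot come from isoperimetry alone but must be extracted from the positive area of a genuine hole, fed together with the already-established bound $\lambda \le R_{vol}$ of \eqref{geo}.
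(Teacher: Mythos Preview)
Your argument is correct and, for the quadratic differential identity and the interior contours, essentially identical to the paper's own proof. The one point of divergence is the outer contour estimate $P(\Gamma_1)>2\pi\lambda$: the paper substitutes the extremality hypothesis $\lambda = 2\,\mathrm{Area}(\Omega)/P$, uses $P\ge P(\Gamma_1)$ to write $P(\Gamma_1)-2\pi\lambda \ge P(\Gamma_1) - 4\pi\,\mathrm{Area}(\Omega)/P(\Gamma_1)$, and then invokes isoperimetry for $\Gamma_1$ together with the strict inclusion $\Omega\subsetneq D$; you instead invoke the \emph{upper} bound $\lambda\le R_{vol}$ from \eqref{geo} and combine it with the same isoperimetric input. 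Both routes feed the same two geometric facts (isoperimetry for the region bounded by $\Gamma_1$, and positive hole area from $n\ge 2$) through a different bound on $\lambda$; yours is marginally cleaner in that it avoids the intermediate step $P\ge P(\Gamma_1)$, while the paper's has the aesthetic advantage of using only the extremality hypothesis rather than the separate Alexander--Khavinson bound.
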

\begin{proof}
By Theorem \ref{DK}, $\varphi$ satisfies
\be
\bar{z}(s)-i\lambda\frac{d\bar{z}}{ds}=\varphi(z(s))
\ee
on $\Gamma,$ where
$s$ is the arc-length parameter.  Differentiating with respect to arc-length gives
\be
\frac{d\bar{z}}{ds} - i \lambda \frac{d^2\bar{z}}{ds^2} = \varphi '(z) \frac{dz}{ds}.
\ee
Dividing by $\frac{d\bar{z}}{ds}$, using the fact that $s$ is arc-length, and by definition of $\kappa$, we arrive at
\be
1 + \lambda \kappa = \varphi '(z) \left( \frac{dz}{ds}\right)^2,
\ee
or, equivalently,
\be \la{l0}
{\varphi'(z)}dz^2 = (1+\lambda \kappa) ds^2,  \quad z \in \Gamma_k.
\ee
Since $\varphi'$ is analytic and since the right hand side of \eqref{l0} is real, ${\varphi'(z)}dz^2$ is a quadratic differential that is real-valued on
$\p \Omega$.

Now notice that for any contour $\Gamma_{k}$,
\begin{eqnarray*}
\int_{\Gamma_k} \kappa ds & = & -i \int_{\Gamma_k} \frac{\frac{d^2\bar{z}}{ds^2}}{\frac{d \bar{z}}{ds}} ds \\
& = & -i \Delta_{\Gamma_k} \left( \log \left( \frac{d \bar{z}}{ds} \right) \right)\\
& = &  \Delta_{\Gamma_k} \left( \arg \frac{d \bar{z}}{ds} \right).
\end{eqnarray*}
For $k = 1$, this value is equal to $-2 \pi$, while for $k \geq 2,$ we get $ 2 \pi$.
Therefore for any interior contour $\Gamma_{k}, \, k \ge 2$,
we obtain
\be
\int_{\Gamma_k} (1+ \lambda \kappa)ds = L_k + 2\pi\lambda > 0.
\ee
On $\Gamma_1$, we have
\be
\int_{\Gamma_1} (1+ \lambda \kappa)ds = L_1 - 2\pi\lambda = L_1 - \frac{4\pi A}{P},
\ee
with $A = \mbox{Area}(\Omega)$ and $P = L_1 + \sum_{k \ge 2}L_k$ its perimeter. Using $P \ge L_1$, we see that
\be
L_1 - \frac{4\pi A}{P} \ge L_1 - \frac{4\pi\mbox{Area}(\Omega)}{L_1} \ge \frac{4\pi}{L_1}\left [ \mbox{Area}(\Omega_1^c) -
\mbox{Area}(\Omega) \right ] > 0,
\ee
where we have used the isoperimetric inequality for the complement of $\Omega_1$, $\Omega_1^c$, and the fact that
$\Omega \subseteq \Omega_1^c$.
\end{proof}

Now recall that as discussed in the introduction, if $S_k(z)$ are the Schwarz functions for $\Gamma_k$, that is, $S_k$ is analytic in a neighborhood
of $\Gamma_k$ and satisfies $S_k(z) = \bar z$ on $\Gamma_k$, then for $u_k(z) = \sqrt{S'_k(z)}$, the functions $u_k$ satisfy the Ricatti equation
\be \la{riccati}
u_k^2 + i\alpha \lambda u_k' = \varphi'(z), \, z \in \Omega,
\ee
where $\alpha = 1$ for $k \geq 2$ and $\alpha = -1$ for $k = 1$.  By a standard reduction, the functions
\be \la{linsol}
v_k(z) := \exp \left [-\frac{i \alpha}{\lambda}  \int^z u_k(\zeta) d\zeta \right ], \quad k = 1, 2, \ldots, n,
\ee
solve the linear second-order differential equation associated with \eqref{riccati}
\be \la{ODE}
v'' = -\frac{\varphi'}{\lambda^2} v.
\ee

\begin{definition} \la{nw1}
Let $\Sigma^{\pm}$ be the union of Stokes and anti-Stokes graphs of \eqref{ODE} in $\Omega$ \cite[Lemma~9.2-1]{Olver}, i.e. the union of arcs  $\{ \gamma^{\pm}_j\}$ satisfying
$$
\Im \int_{z_0}^z \sqrt{\varphi'(\zeta)} d\zeta = 0, \quad \zeta \in \gamma^{+}_j \subset \Sigma^{+}, \quad
\Re \int_{z_0}^z \sqrt{\varphi'(\zeta)} d\zeta = 0, \quad \zeta \in \gamma^{-}_j \subset \Sigma^{-},
$$
where $z_0$ is any zero of $\varphi'(z)$ in $\overline{\Omega}$. 
\end{definition}

It is known \cite{Olver,D} that if $\varphi'(z)$ is analytic in $\Omega$, then $\Sigma^{+}, \Sigma^{-}$ have the same number of arcs $\gamma^{\pm}_j$, they intersect only at zeros of $\varphi'(z)$, and each arc $\gamma^{\pm}_j$ is analytic, with one endpoint being a zero of $\varphi'$, and the other being either another zero, or a point on $\p \Omega$ (or possibly, both). Moreover, at a zero $z_0 \in \Omega$ of $\varphi'$ of order $m \ge 1$, there are exactly $m+2$ arcs from $\Sigma^{+}$ with local angle between adjacent arcs equal to $2\pi/(m+2)$, and another $m+2$ arcs from $\Sigma^{-}$, each of them bisecting the angle between two consecutive arcs of $\Sigma^{+}$.

Let $z_0 \in \overline{\Omega}$ be a zero of order $m$ of $\varphi'$. By elementary calculations, it is easy to describe the local power series expansion of $v$ about $z_0$,
but the local solution is not convenient to use when exploring global properties of solutions such as $|v_k(z)|_{\Gamma_k} = $ constant, satisfied by \eqref{linsol}. Instead, we will examine the asymptotic series representations, valid outside a small neighborhood of $z_0$. Defining the local coordinates $\zeta = \epsilon(z-z_0)$, with $\epsilon$ a scale parameter, arbitrarily small but strictly positive, then c.f. \cite[Ch. 6]{LG}, \cite[Ch. 3]{Olver}, \cite{FF}, the general solution for Eq.~\eqref{ODE} admits the asymptotic series representation known as Liouville-Green (LG) in applied mathematics and Jeffreys-Wentzell-Kramers-Brillouin (JWKB) in theoretical physics
\be \la{stokes}
v(\zeta,\epsilon) = \frac{\sqrt{\lambda}}{(\varphi')^{1/4}} \left [ C_1 e^{\frac{i}{\lambda \epsilon}\int_0^\zeta  \sqrt{\varphi'} d\xi} + C_2 e^{-\frac{i}{\lambda \epsilon}\int_0^\zeta  \sqrt{\varphi'} d\xi} \right ][1 + o(\epsilon)],
\ee
where $C_{1, 2}$ are constants, and $\zeta$ belongs to a domain $D$ having 0 as boundary point. In particular, for $z \in \Sigma^{+}$, the domain of validity includes a wedge domain of angle $2\pi/(m+2)$, with $\Sigma^{+}$ bisecting the angle.  The solution is approximated by the asymptotic expansion in the sense of the Borel-Ritt theorem \cite[\S~3.3]{Olver}, i.e., the R.H.S. of \eqref{stokes} is a function of $\zeta$, smooth in both  $\zeta$ and $\epsilon$, and
\be \la{conv}
\lim_{\epsilon \to 0}\frac{1}{\epsilon} \left [ v(\zeta, \epsilon) - \frac{\sqrt{\lambda}}{(\varphi')^{1/4}} \left (C_1 e^{\frac{i}{\lambda \epsilon}\int_0^\zeta  \sqrt{\varphi'} d\xi} + C_2 e^{-\frac{i}{\lambda \epsilon}\int_0^\zeta  \sqrt{\varphi'} d\xi} \right ) \right ] = 0, \, \zeta \in D.
\ee
Let us use this asymptotic expansion to examine the potential zeros of $\varphi'$.

\begin{lemma} \la{n2}
The function $\varphi'$ cannot vanish at any point on $\p \Omega$, so the quadratic differential $\varphi'(z) dz^2$ is strictly positive-definite on $\p \Omega$.
\end{lemma}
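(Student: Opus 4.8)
The plan is to argue by contradiction. Suppose $\varphi'$ vanishes at some $z_0 \in \partial\Omega$, say $z_0 \in \Gamma_k$, to order $m \ge 1$. By the identity $\varphi'(z)\,dz^2 = (1+\lambda\kappa)\,ds^2$ of Theorem~\ref{function}, and since $dz^2 = \tau^2\,ds^2$ has unit modulus, such a zero is exactly a boundary point where $1 + \lambda\kappa = 0$, i.e. where the real density $1+\lambda\kappa$ of the quadratic differential degenerates. The engine of the argument is the second-order equation \eqref{ODE} together with its distinguished solution \eqref{linsol}: I will show that a turning point $z_0$ on $\partial\Omega$ is incompatible with the boundary behaviour of $v_k$.

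First I would record that $v_k$ has constant modulus on $\Gamma_k$. On $\Gamma_k$ the Schwarz function gives $S_k'(z) = \overline{\tau}/\tau = 1/\tau^2$, so $u_k = \sqrt{S_k'} = 1/\tau$ for a suitable branch, and therefore $u_k\,dz = (1/\tau)(\tau\,ds) = ds$. Substituting into \eqref{linsol} yields $v_k = \exp[-\tfrac{i\alpha}{\lambda}\,s]$ along $\Gamma_k$, where $s$ is arclength, so that $|v_k| \equiv 1$ on $\Gamma_k$ (either branch of $u_k$ gives a unimodular $v_k$). Next, since $\varphi'(z)\,dz^2 = (1+\lambda\kappa)\,ds^2$ is real on $\Gamma_k$, the form $\sqrt{\varphi'}\,dz = \pm\sqrt{1+\lambda\kappa}\,ds$ is real there, hence $\Im\int_{z_0}^z\sqrt{\varphi'}\,d\zeta \equiv 0$ along $\Gamma_k$; that is, $\Gamma_k$ is locally a Stokes arc of $\Sigma^{+}$ through the turning point $z_0$, bisecting a wedge in which the Liouville--Green representation \eqref{stokes} is valid.

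Now I would run the asymptotics. Write $W(z) = \int_{z_0}^z\sqrt{\varphi'}\,d\zeta$, real on the arc of $\Gamma_k$ abutting $z_0$. By \eqref{stokes}--\eqref{conv}, $v_k \sim (\varphi')^{-1/4}\,[\,C_1 e^{iW/\lambda} + C_2 e^{-iW/\lambda}\,]$ there, so $|v_k| \sim |\varphi'|^{-1/4}\,|C_1 e^{iW/\lambda} + C_2 e^{-iW/\lambda}|$. As $z \to z_0$ along $\Gamma_k$ one has $W \to 0$ while $|\varphi'|^{-1/4} \to \infty$. If $C_1 + C_2 \ne 0$, the bracket tends to $C_1 + C_2 \ne 0$ and $|v_k| \to \infty$. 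If $C_1 + C_2 = 0$, the bracket equals $2iC_1\sin(W/\lambda) \sim (2iC_1/\lambda)W$, and since $\varphi' \sim c\,(z-z_0)^m$ gives $W \sim \mathrm{const}\,(z-z_0)^{(m+2)/2}$ and $|\varphi'|^{1/4}\sim \mathrm{const}\,|z-z_0|^{m/4}$, one finds $|v_k| \sim |z-z_0|^{(m+4)/4} \to 0$. In either case $|v_k|$ cannot equal the constant $1$ near $z_0$, a contradiction. Hence $\varphi'$ has no zero on $\partial\Omega$; being continuous and nowhere zero on each connected $\Gamma_k$, the quantity $1+\lambda\kappa$ has constant sign, forced positive by $\oint_{\Gamma_k}(1+\lambda\kappa)\,ds > 0$ from Theorem~\ref{function}, so $\varphi'(z)\,dz^2 = (1+\lambda\kappa)\,ds^2$ is strictly positive-definite. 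As a consistency check for odd $m$, the $m+2$ horizontal directions at a zero of odd order are never antipodal, so a real-analytic $\Gamma_k$ could not even pass smoothly through $z_0$; the modulus computation is what covers all orders uniformly.

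The main obstacle is the rigour of the turning-point asymptotics: \eqref{stokes} is an asymptotic statement as $\epsilon \to 0$ at fixed rescaled variable, with error controlled by \eqref{conv}, and the expansion formally breaks down precisely at $z_0$, where the prefactor $(\varphi')^{-1/4}$ is singular. I must therefore justify that the limiting behaviour of $|v_k|$ as $z \to z_0$ along the Stokes arc can be extracted from \eqref{stokes}, controlling the interplay between the $z \to z_0$ and $\epsilon \to 0$ limits, pinning down the correct branches of $(\varphi')^{1/4}$ and $\sqrt{\varphi'}$ in the bisected wedge, and confirming that no dominant/recessive ambiguity spoils the estimate (on $\Sigma^{+}$ both exponentials are oscillatory, so the prefactor genuinely governs the modulus). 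Everything else reduces to bookkeeping with the local expansion $\varphi' \sim c\,(z-z_0)^m$.
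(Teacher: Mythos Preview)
Your framework matches the paper's: you correctly record $|v_k|\equiv 1$ on $\Gamma_k$, place an arc of $\Gamma_k$ through $z_0$ in $\Sigma^{+}$, and bring in the Liouville--Green representation \eqref{stokes}. The divergence is in how the contradiction is extracted. You send $z\to z_0$ along the Stokes arc and read off $|v_k|\to 0$ or $\infty$ from the singular prefactor $(\varphi')^{-1/4}$. The paper instead \emph{fixes} a point $z$ on the arc strictly away from $z_0$, sends the scale parameter $\epsilon\to 0$ along the sequence $\epsilon_n=(s+\omega(s))/(2\pi\lambda n)$, and uses \eqref{conv} to force $K_1+K_2=1$, i.e.\ $(\varphi'(z))^{1/4}=\sqrt{\lambda}\,(C_1+C_2)$ for \emph{every} such $z$; hence $\varphi'$ is constant along the arc and therefore identically zero, the contradiction. (The paper also explicitly rules out the possibility that both arcs of $\Gamma_k$ at $z_0$ lie in $\Sigma^{-}$, using the positivity of $\oint_{\Gamma_k}(1+\lambda\kappa)\,ds$; your assertion that $\sqrt{\varphi'}\,dz$ is real on $\Gamma_k$ silently assumes $1+\lambda\kappa\ge 0$, which need not hold on both sides of a zero.)

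The obstacle you name in your last paragraph is not a loose end to be tightened later; it is fatal to the $z\to z_0$ route as written. Since $\varphi'$ is analytic in $\overline{\Omega}$, every solution of \eqref{ODE}, in particular $v_k$, is analytic at $z_0$, and your own computation gives $|v_k|\equiv 1$ on all of $\Gamma_k$, so $|v_k(z_0)|=1$. The conclusion $|v_k|\to 0$ or $\infty$ is therefore simply false: what actually happens is that the correction hidden in the $o(\epsilon)$ factor of \eqref{stokes} grows to the same order as the leading part as $z\to z_0$ and exactly compensates the blow-up of $(\varphi')^{-1/4}$. This is the standard turning-point phenomenon---near a zero of the coefficient the LG approximant must be replaced by an Airy-type uniform approximant in which no such blow-up occurs---so your dichotomy on $C_1+C_2$ is a statement about a non-uniform approximant, not about $v_k$. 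The paper's device of holding $z$ fixed and varying $\epsilon$ is precisely what keeps \eqref{stokes}--\eqref{conv} honest and converts the constancy of $|v_k|$ into a $z$-independent constraint on $K_{1,2}=\sqrt{\lambda}\,(\varphi')^{-1/4}C_{1,2}$, from which the contradiction then follows without ever approaching the turning point.
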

\begin{proof}
Assume that $\varphi'(z_0) = 0, z_0 \in \Gamma_k \subset \p \Omega$. Then by Theorem~\ref{function}, $\Gamma_k \subset \Sigma^{+}\cup \Sigma^{-}$. The two arcs $\gamma_{1,2}(z_0)$ of $\Gamma_k$ meeting at $z_0$ are elements either of $\Sigma^{+}$ or of $\Sigma^{-}$. However, at least  one such arc must belong to $\Sigma^{+}$, because otherwise $\Gamma_k \subset \Sigma^{-}$, which implies that $\varphi' dz^2$ is negative-definite on $\Gamma_k$, so according to \eqref{l0}, $1 + \alpha \lambda \kappa \le 0$ everywhere on $\Gamma_k$, which contradicts Theorem~\ref{function}.

Take now $z$ on the arc belonging to $\Sigma^{+} \cap \Gamma_k$.   According to the LG formula \eqref{stokes}, the solution \eqref{linsol} has the asymptotic expansion
\be
v_k(z_0 + \epsilon \zeta) =
\frac{\sqrt{\lambda}}{(\varphi')^{1/4}} \left [ C_1 e^{\frac{i}{\lambda \epsilon}\int_0^\zeta  \sqrt{\varphi'} d\xi} + C_2 e^{-\frac{i}{\lambda \epsilon}\int_0^\zeta  \sqrt{\varphi'} d\xi} \right ][1 + o(\epsilon)]
\ee
with $C_{1,2}$ constants. Denote by $\gamma = \Sigma^{+} \cap \Gamma_k \cap D$, and notice that along $\gamma$, condition \eqref{conv} and \eqref{linsol} give
\be \la{kn}
\lim_{\epsilon \to 0}\frac{1}{\epsilon} \left [ e^{-i\frac{\alpha}{\lambda \epsilon}s(\zeta)} - \frac{\sqrt{\lambda}}{(\varphi')^{1/4}} \left (C_1 e^{\frac{i}{\lambda \epsilon}\int_0^\zeta  \sqrt{\varphi'} d\xi} + C_2 e^{-\frac{i}{\lambda \epsilon}\int_0^\zeta  \sqrt{\varphi'} d\xi} \right ) \right ] = 0.
\ee
Take $z \in \gamma$ so that the arclength along $\gamma$ from $z_0$ to $z$, is $s > 0$. Let $\omega(s) := \int_{z_0}^z  \sqrt{\varphi'} d\xi$ and note that $\omega(s) > 0$ by the choice of $z$.  Also, let $K_{1, 2}(z) :=
\frac{\sqrt{\lambda}}{(\varphi'(z))^{1/4}} C_{1,2}$ and consider first the case of an interior boundary component $\Gamma_k$, i.e., $\alpha = 1$. Condition \eqref{kn} implies then, that for a fixed $z$,
\be \la{yo}
\lim_{\epsilon \to 0}\left |1  - K_1 e^{\frac{i}{\lambda \epsilon}(s+\omega(s))} - K_2 e^{\frac{i}{\lambda \epsilon}(s-\omega(s))}\right | = 0.
\ee
Taking now the sequence $\epsilon_n \equiv \frac{s + \omega(s)}{2 \pi \lambda n}, n \in \mathbb{N}$, we obtain
\be
\lim_{n \to \infty}\left |1  - K_1 - K_2 q^n\right | = 0, \quad
q = e^{2\pi i\frac{s-\omega(s)}{s + \omega(s)}} \in \mathbb{T}.
\ee
This is possible either if $K_1 = 1, K_2 = 0$ for arbitrary $q$, or if $K_1 + K_2 = 1, q=1$.
Therefore,  $K_1+K_2 = 1$, so $|\varphi'(z)| $ is a constant (and hence $0$, since $\varphi'(z_0) = 0$) along the arc $\gamma$. But then $\varphi'$ is identically $0$, which cannot happen unless $\Omega$ is  a disc, which is a contradiction.

For the case of the exterior boundary $\alpha = -1$, we exhange $K_1$ and $K_2$ in \eqref{yo} and the argument follows identically.
\end{proof}

\begin{thm}
The domain $\Omega$ is a {\emph{maximal domain}} in the sense of \cite{Jenkins}, so its connectivity (and the total number of boundary components of $\p \Omega$) is 1 or 2.
\end{thm}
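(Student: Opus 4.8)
The plan is to read the conclusion off the trajectory structure of the quadratic differential $Q := \varphi'(z)\,dz^2$. By Theorem~\ref{function}, $Q$ is holomorphic in $\Omega$ (here $\varphi$ is analytic in $\overline{\Omega}$, so $Q$ has no poles in $\overline{\Omega}$), it is real on $\p\Omega$, and by Lemma~\ref{n2} it is strictly positive there. Consequently each boundary component $\Gamma_k$ is a \emph{regular} closed horizontal trajectory of $Q$: it carries no zero of $\varphi'$, and $\sqrt{Q} = \sqrt{1+\lambda\kappa}\,ds$ is real and nonvanishing along $\Gamma_k$. The strategy is then to invoke Jenkins' basic structure theorem \cite{Jenkins} for trajectories of a quadratic differential and to identify $\Omega$ with a single maximal domain of $Q$, whose only possibilities compatible with a holomorphic $Q$ whose boundary consists of closed trajectories are a ring domain (connectivity $2$) or a circle domain (connectivity $1$).

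I would organize the argument as follows. First, since $Q$ has no poles in $\overline{\Omega}$, the structure theorem leaves no room for end domains or strip domains, whose trajectories must terminate at poles; one must also exclude recurrent (density) domains, which I would do using that a full neighborhood of each $\Gamma_k$ inside $\Omega$ is foliated by closed trajectories homotopic to $\Gamma_k$ (the local model of a regular closed trajectory), so that near the boundary no trajectory can be recurrent. Second, the maximal connected set swept out by the closed trajectories homotopic to a given $\Gamma_k$ is, by definition, a ring or circle domain $R_k$ having $\Gamma_k$ as one boundary, and this $R_k$ is a maximal domain in the sense of \cite{Jenkins}. Third, I would show that $\Omega$ coincides with a single such $R_k$: since each $\Gamma_k$ already belongs to $\p\Omega$, the ``inner'' boundary of $R_k$ must be either a critical graph of $Q$ (a union of trajectories through interior zeros of $\varphi'$) or a second boundary component of $\Omega$, and the positivity $\oint_{\Gamma_k}(1+\lambda\kappa)\,ds > 0$ from Theorem~\ref{function} is consistent with these trajectories genuinely closing up.

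The crux, and the step I expect to be the main obstacle, is ruling out a nontrivial interior critical graph, i.e.\ showing that $\Omega$ cannot split into more than two ring/circle pieces. The key geometric observation is that a critical trajectory issuing from an interior zero $z_0$ (there are $m+2 \ge 3$ of them, as recorded in the discussion following Definition~\ref{nw1}) cannot terminate on $\p\Omega$: trajectories are mutually disjoint, and the collar of closed trajectories insulating each $\Gamma_k$ prevents any critical trajectory from limiting onto it. Hence such critical trajectories would join interior zeros to one another, producing a compact interior critical graph that subdivides $\Omega$ and raises its connectivity. To turn this into the sharp bound, I would pass to the Schottky double $\hat{\Omega}$ of genus $n-1$, across which $Q$ extends holomorphically by reflection (reality of $Q$ on $\p\Omega$) to a holomorphic quadratic differential $\hat{Q}$ carrying $4(n-1)-4$ zeros counted with multiplicity; Lemma~\ref{n2} places all of them in the interior, symmetrically distributed, and matching this count against the requirement that every $\Gamma_k$ bound a maximal domain should force the zero count to vanish, i.e.\ $n \le 2$. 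The delicate points to handle carefully are the rigorous exclusion of recurrent trajectories, the precise Euler-characteristic/zero bookkeeping under the reflection symmetry, and checking that the local wedge structure furnished by the Liouville--Green expansion \eqref{stokes} is compatible with closing up the boundary trajectories.
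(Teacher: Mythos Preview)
Your setup is exactly the paper's: $Q=\varphi'(z)\,dz^2$ is holomorphic on $\overline{\Omega}$, real on $\p\Omega$, and by Lemma~\ref{n2} strictly positive there, so each $\Gamma_k$ is a regular closed horizontal trajectory; one then reads off that $\Omega$ is a maximal domain and quotes Jenkins' Theorem~1 to obtain connectivity $\le 2$. Where you diverge is in the mechanism for excluding an interior critical graph. The paper does \emph{not} pass to the Schottky double or count zeros. It argues directly on the Stokes graph $\Sigma^{+}$: any open arc of $\Sigma^{+}$ in $\overline{\Omega}$ would, by the general properties recorded after Definition~\ref{nw1}, have to terminate on $\p\Omega$; but $\p\Omega\subset\Sigma^{+}$ and Stokes arcs meet only at zeros of $\varphi'$, so the landing point would be a boundary zero, contradicting Lemma~\ref{n2}. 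Combined with the cited fact that the Stokes graph is connected and contains $\p\Omega$, this forces all trajectories through $\Sigma^{+}$ to be closed curves lying in $\p\Omega$, whence $\Omega$ is already a maximal domain in Jenkins' sense. Your ``collar'' observation that critical trajectories cannot limit on $\Gamma_k$ is precisely this step, so you have the right geometric picture; the paper simply packages it via the connectedness of $\Sigma^{+}$ rather than via ring-domain bookkeeping.

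The genuine gap in your proposal is the Schottky-double endgame. Knowing that $\hat Q$ has $4(n-1)-4$ zeros on the double tells you only that for $n\ge 3$ there \emph{are} interior zeros; it does not by itself produce a contradiction. A nontrivial interior critical graph could in principle partition $\Omega$ into several ring domains, each of connectivity~$2$, whose union has higher connectivity; your sentence ``matching this count against the requirement that every $\Gamma_k$ bound a maximal domain should force the zero count to vanish'' is exactly the step that needs an argument and does not follow from the zero count alone. You would still need a topological/combinatorial obstruction (e.g.\ that the interior critical graph, being disjoint from $\p\Omega$, disconnects the Stokes graph, contradicting its connectedness), and once you isolate that obstruction you have essentially reproduced the paper's one-line argument without needing the double at all. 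So the Schottky-double route is a detour: it is not wrong, but it postpones rather than resolves the crux, and the paper's direct appeal to the global structure of $\Sigma^{+}$ is both shorter and what actually finishes the proof.
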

\begin{proof}
Note that there cannot be any open arcs of  $\Sigma^{+}$ in $\overline{\Omega}$, because by the properties of $\Sigma^{+}$ discussed earlier, such an arc would have to end
at a zero of $\varphi'$ on $\p \Omega$, which is prohibited by Lemma~\ref{n2}. Moreover, the Stokes graph is connected, and it contains $\p \Omega$. Therefore, any trajectory (in the sense of \cite{Jenkins}) of the quadratic differential $\varphi'(z) dz^2$ that includes $\Sigma^{+}$ must be
 a closed curve in $\p \Omega$, so $\Omega$ is a {\emph{maximal domain}}. Then from \cite[Theorem 1]{Jenkins}, the connectivity of $\Omega$ cannot exceed 2.
\end{proof}

\section{Solution for the doubly-connected case } \la{DP}

To complete the proof of Conjecture 1, let us prove it for doubly-connected domains.

\begin{lemma} \la{l1}
Let $\Omega$ be a doubly-connected extremal domain with analytic boundary $\Gamma = \Gamma_1 \cup \Gamma_2.$ If $\varphi$ is the best analytic approximation to $\bar{z}$ in the supremum norm, and if $h$ is the conformal map from $\Omega$ onto an annulus $A:= \{w: R_2 \le |w| \le R_1\}$, then
    \be \la{cm}
        \varphi'(z) = C [(\log h(z))']^2,
    \ee
for some constant $C$.  In particular, $\varphi'$ is non-vanishing and $\sqrt{\varphi'}$ is single-valued in $\Omega.$
\end{lemma}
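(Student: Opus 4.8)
The plan is to show that on the doubly-connected extremal domain $\Omega$, the quadratic differential $\varphi'(z)\,dz^2$ coincides up to a constant with $[(\log h)']^2\,dz^2$, where $h$ is the conformal map onto the round annulus $A$. The natural strategy is to exploit the fact—established in the previous section—that $\varphi'(z)\,dz^2$ is a positive-definite quadratic differential on $\partial\Omega$ with no zeros on the boundary, and that $\Omega$ is a \emph{maximal domain} for this differential. On the annulus $A$, the obvious candidate for a quadratic differential whose trajectories are the two boundary circles is $(\log w)'^2\,dw^2 = dw^2/w^2$, whose trajectories are exactly the concentric circles $|w| = \text{const}$. So the idea is to transport $\varphi'\,dz^2$ through $h$ and compare it with $dw^2/w^2$ on $A$.

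**The key steps, in order.** First I would push $\varphi'(z)\,dz^2$ forward under $h$ to obtain a quadratic differential $Q(w)\,dw^2$ on the annulus $A$, where $Q(w) = \varphi'(z)\,(z'(w))^2$ with $z = h^{-1}(w)$. Since $h$ is conformal (hence zero-free derivative) and $\varphi'$ has no zeros on $\partial\Omega$ by Lemma~\ref{n2}, and since $\Omega$ is maximal so that both boundary circles are closed trajectories, $Q\,dw^2$ is a quadratic differential on $A$ that is real and positive on both bounding circles $|w| = R_1, |w| = R_2$, and whose trajectory structure makes those circles closed trajectories. Next I would argue that a holomorphic quadratic differential on an annulus whose two boundary circles are closed trajectories must have all of its trajectories be concentric circles, forcing $Q(w)\,dw^2$ to be a constant multiple of $dw^2/w^2$: the trajectory through any interior point is a closed curve separating the two boundary components, and the only such foliation by closed trajectories on an annulus with no singularities is the concentric one. (Here one uses that $\varphi'$ is non-vanishing inside $\Omega$, which itself follows once we know there are no interior zeros—a point that must be secured, presumably by the same Stokes-graph/maximality argument used to rule out boundary zeros.) Concluding $Q(w) = C/w^2$ and pulling back through $h$ gives $\varphi'(z) = C\,(\log h(z))'^2$, since $(\log w)' = 1/w$ and the chain rule yields $(\log h(z))' = h'(z)/h(z)$.

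**Finishing.** Once \eqref{cm} is in hand, the final assertions are immediate: $(\log h)'$ is non-vanishing because $h'$ is (conformality) and $h$ omits $0$, so $\varphi'$ is non-vanishing; and $\sqrt{\varphi'} = \sqrt{C}\,(\log h)'$ is single-valued because $(\log h)'(z) = h'(z)/h(z)$ is a single-valued meromorphic function on $\Omega$ (the multivaluedness of $\log h$ disappears under differentiation). This single-valuedness is exactly the "physical droplet" condition discussed in Remark~(iii) of the previous section, and it is the property that makes the subsequent Riccati analysis tractable.

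**The main obstacle.** I expect the crux to be establishing rigorously that a holomorphic quadratic differential on the annulus having both boundary circles as closed trajectories is forced to be $C\,dw^2/w^2$—that is, ruling out any quadratic differential whose trajectory structure is compatible with the boundary conditions but which is not the concentric one. This requires knowing there are no zeros of $\varphi'$ in the \emph{interior} of $\Omega$ (not just on $\partial\Omega$), because an interior zero would create trajectories that spiral or split rather than close up, and would be incompatible with the maximal-domain/closed-trajectory picture. Controlling the global trajectory structure—as opposed to the purely local behavior captured by the Liouville–Green expansion used in Lemma~\ref{n2}—is the delicate part, and I anticipate the proof will lean on the classification of quadratic-differential trajectory structures on ring domains (e.g. the theory in \cite{Jenkins}) to close this gap cleanly.
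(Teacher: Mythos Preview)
Your plan heads in the right direction but is considerably more elaborate than the paper's, and the obstacle you flag at the end is genuine for your route yet entirely absent from theirs.

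The paper's proof is a short, purely complex-analytic computation that never touches the global trajectory structure or the question of interior zeros. From the boundary relation $\bar z - i\lambda\,\dot{\bar z}=\varphi$ one differentiates in arclength and divides by $\dot{\bar z}$ to see that $\varphi'(z)\,(\dot z)^2$ is real on $\Gamma$ (this is essentially Theorem~\ref{function} again, re-derived on the spot). Writing $z=g(w)$ with $g=h^{-1}$ and using that on $\partial A$ one has $dw=\pm i\,w\,d\theta$, the authors observe that
\[
\psi(w):=\varphi'\bigl(g(w)\bigr)\,\bigl(g'(w)\,w\bigr)^2
\]
is bounded and analytic in $A$ and real-valued on both boundary circles. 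A bounded analytic function on an annulus that is real on $\partial A$ is constant (its imaginary part is harmonic with zero boundary values). Hence $\psi\equiv C$, which rewrites as $\varphi'(z)=C\,(h'/h)^2=C\,[(\log h)']^2$. The non-vanishing of $\varphi'$ in $\Omega$ and the single-valuedness of $\sqrt{\varphi'}$ then fall out as \emph{consequences} of this formula, not as prerequisites.

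Your trajectory-classification argument, by contrast, needs the interior non-vanishing of $\varphi'$ as an \emph{input}: if the pushed-forward differential $Q(w)\,dw^2$ had a zero in $A$, the two boundary circles being closed trajectories would not by itself force the concentric foliation. You correctly identify this as the crux, but nothing in Section~\ref{reduction} supplies it---Lemma~\ref{n2} treats only boundary zeros, and the maximality theorem only bounds connectivity. The paper's ``real on $\partial A$ $\Rightarrow$ constant'' observation sidesteps all of this, replacing what you anticipate as the delicate step by a one-line application of the maximum principle. In effect, once you have pushed $\varphi'(z)\,dz^2$ to the annulus and noted it is real and positive on $\partial A$, the fastest way to finish is not trajectory theory but simply to multiply by $w^2$ and invoke that a bounded analytic function real on the boundary of a ring is constant.
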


\begin{proof}  Since $\Omega$ is extremal, we have on $\Gamma$
    \be
        \bar{z} - i \lambda \dot{\bar{z}} = \varphi(z(s)),
    \ee
where $\lambda = 2 A/P,$ $s$ is the arc-length parameter, and $\dot{\bar{z}}= \frac{d \bar{z}}{ds}$. As before, differentiating with respect to arc-length gives
    \be
        \dot{\bar{z}} - i \lambda \ddot{\bar{z}} = \varphi'(z) \cdot \dot{z},
    \ee
and dividing by $\dot{\bar{z}}$ gives
    \be \la{real}
        1 -i \lambda \overline{\left( \frac{\ddot{z}}{\dot{z}}\right)} = \varphi'(z) \cdot (\dot{z})^2.
    \ee
Since $\dot{z}$ and $\ddot{z}$ are orthogonal, the left hand side of \eqref{real} is real-valued on $\Gamma,$ and therefore so is
$\varphi'(z) \cdot  (\dot{z})^2.$  Letting $h$ be the conformal map from $\Omega$ onto the annulus $A$ and $g = h^{-1}$  and writing $z = g(w)$ yields that
    \be \la{real2}
        \varphi'(z) \cdot  (\dot{z})^2 = \varphi'(g(w)) \cdot  \left( \frac{g'(w) dw}{|g'(w)| |dw|}\right)^2
    \ee
is real-valued on $\partial A$, and hence so is $ \varphi'(g(w)) \cdot  \left( g'(w) \frac{dw}{|dw|} \right)^2.$  Now notice that on $\partial A,$ $dw = \pm i w d \theta,$ and hence
$ \psi(w) = \varphi'(g(w)) \cdot  \left( g'(w) w \right)^2$ is a bounded analytic function in the annulus that is real-valued on $\partial A,$ and therefore is a constant $C.$ Rewriting
$\psi$ in terms of $z$ gives
    \be
        \varphi'(z) \cdot \left( \frac{h(z)}{h'(z)} \right)^2 = C,
    \ee
or $\varphi'(z) =  C \left( \frac{h'(z)}{h(z)} \right)^2 = C [(\log h(z))']^2,$ as desired.
\end{proof}

\begin{lemma} \la{l2}
The diffeomorphism  $\mu: \Gamma_2 \to \Gamma_1$, defined by
$$
\mu(z) = h^{-1}\left (\frac{R_1}{R_2}h(z) \right ),
$$
is a M\"obius transformation.
\end{lemma}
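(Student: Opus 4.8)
The plan is to recast ``$\mu$ is M\"obius'' as the vanishing of a Schwarzian derivative and then to read that vanishing off the linear equation \eqref{ODE}. Since $\Gamma_2$ is real-analytic and $h$ is conformal up to $\p\Omega$, the boundary map $\mu=g\circ(k\cdot)\circ h$, with $g=h^{-1}$ and $k=R_1/R_2$, continues holomorphically to an annular neighborhood of $\Gamma_2$; by the identity theorem it agrees with a M\"obius transformation there iff $\{\mu,z\}\equiv 0$, where $\{f,z\}=(f''/f')'-\frac12(f''/f')^2$. Writing $w=h(z)$ and $G(w):=\{g,w\}$, the cocycle rule for the Schwarzian together with the inversion identity $\{h,z\}=-G(w)\,(h'(z))^2$ yields
\[
\{\mu,z\}=(h'(z))^2\left[\,k^2\,G(kw)-G(w)\,\right].
\]
Hence the lemma is equivalent to the scaling relation $k^2 G(kw)=G(w)$ on a neighborhood of $|w|=R_2$; expanding $G$ in its Laurent series on the annulus, this forces $G(w)=c\,w^{-2}$, i.e. the transplanted equation must be equidimensional. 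The real content of the lemma is thus to produce this rigidity.

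To do so I would work with the second-order equation itself. Transplanting \eqref{ODE} by $z=g(w)$ and using Lemma~\ref{l1} in the form $\varphi'(g(w))\,(g'(w))^2=C/w^2$, the function $V(w):=v(g(w))$ satisfies
\[
V''-\frac{g''}{g'}\,V'+\frac{C}{\lambda^2\,w^2}\,V=0 .
\]
Powers $V=w^{p}$ solve this precisely when $g''/g'$ is a multiple of $1/w$, i.e. exactly in the equidimensional case identified above, and such powers automatically have constant modulus on each circle $|w|=R_k$. Conversely, the two solutions singled out in \eqref{linsol} already have constant modulus on the respective boundary components: on $\Gamma_k$ one has $u_k=d\bar z/ds$, so $u_k\,dz=ds$ and $\int^{z}u_k\,d\zeta=s$ is arclength, whence $v_k=\exp(-i\alpha s/\lambda)$ and $|v_k|\equiv 1$ on $\Gamma_k$. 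Thus it suffices to prove that the transplants $V_1,V_2$ of these two solutions are pure powers of $w$ near the boundary.

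The heart of the argument, and the step I expect to be the main obstacle, is this rigidity. By Theorem~\ref{function} the quadratic differential $\varphi'(z)\,dz^2$ is real on $\p\Omega$ and transplants to $C\,(dw/w)^2$, so the Schwarz reflections $\sigma_1,\sigma_2$ across the two circles carry solutions of \eqref{ODE} to solutions, and their composition realizes the dilation $w\mapsto k^2 w$, that is $\mu^2$. The constant-modulus conditions make $v_2$ (resp.\ $v_1$) invariant, up to a unimodular factor, under reflection in $\Gamma_2$ (resp.\ $\Gamma_1$), so that $v_k\circ\mu^2=(\mathrm{const})\,v_k$; the task is to upgrade this multiplier under the full step $w\mapsto k^2 w$ into genuine power-law dependence under the half-step $w\mapsto kw$, and thereby into the equidimensional form of the equation. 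I would attempt this by extending each $v_k$ by reflection to the adjacent annuli, with prescribed modulus on every image circle, and combining this with the Liouville--Green asymptotics \eqref{stokes}, whose leading term here is exactly $(\varphi')^{-1/4}h^{\pm i\sqrt{C}/\lambda}$, to force the subdominant corrections to vanish. The delicate point is to deduce the \emph{exact} power-law behavior from the one-sided modulus data without presupposing that $\Gamma_1,\Gamma_2$ are circles, since (as the Laurent argument above shows) that conclusion is already equivalent to the full statement of the conjecture.
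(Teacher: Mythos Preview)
Your reduction of ``$\mu$ is M\"obius'' to the scaling identity $k^2\,G(kw)=G(w)$ (equivalently $G(w)=c/w^2$, i.e.\ the transplanted equation is equidimensional) is correct, and you are also right that this last identity is already equivalent to the full conclusion of the doubly-connected case. The proposal then stalls exactly where you say it does: you offer a strategy (constant-modulus solutions, Schwarz reflection across both circles, LG asymptotics) but no actual argument that pins down the subdominant terms, and you yourself note the circularity. So as written there is a genuine gap.

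The paper avoids this entirely by making one more direct use of Lemma~\ref{l1}, which you never exploit. Differentiating $h(\mu(z_2))=(R_1/R_2)\,h(z_2)$ and inserting $\varphi'=C\,(h'/h)^2$ gives at once
\[
\mu'(z_2)=\frac{h(z_1)}{h(z_2)}\cdot\frac{h'(z_2)}{h'(z_1)}=\sqrt{\frac{\varphi'(z_2)}{\varphi'(z_1)}},\qquad z_1=\mu(z_2),
\]
so the quadratic differential $\varphi'(z)\,dz^2$ is $\mu$-invariant: $\varphi'(z_2)\,dz_2^2=\varphi'(z_1)\,dz_1^2$. The paper then invokes that $\varphi'$ is, up to a constant, the Schwarzian of any ratio $v_i/v_j$ of solutions of \eqref{ODE}, hence transforms under the change of variable $z_1=\mu(z_2)$ by the Schwarzian cocycle
\[
\varphi'(z_2)\,dz_2^2=\varphi'(z_1)\,dz_1^2+\mathcal{S}(\mu)\,dz_1^2,
\]
and comparison with the invariance just obtained forces $\mathcal{S}(\mu)\equiv 0$.

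In short, you tried to establish the full rigidity $G(w)=c/w^2$ from boundary data on the solutions, whereas the paper gets the $\mu$-invariance of $\varphi'\,dz^2$ for free from the explicit formula in Lemma~\ref{l1} and then lets the Schwarzian cocycle do the rest. The missing idea in your attempt is precisely this two-line computation of $\mu'$; once you have it, no analysis of $v_k$, reflections, or asymptotics is needed for this lemma.
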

\begin{proof}
Clearly, $\mu$ is a diffeomorphism by composition law.
By definition, for any $z_2 \in \Gamma_2, z_1 := \mu(z_2) \in \Gamma_1$,
\be
\frac{h(z_1)}{h(z_2)} = \frac{R_1}{R_2}, \quad (h \circ \mu)(z_2) = \frac{R_1}{R_2}h(z_2),
\ee
so the chain rule and Lemma~\ref{l1} give
\be \la{rr}
h'(z_1)\cdot \mu'(z_2) = \frac{R_1}{R_2}h'(z_2) \Rightarrow \mu'(z_2) = \frac{h(z_1)}{h(z_2)}  \cdot \frac{h'(z_2)}{h'(z_1)} = \sqrt{\frac{\varphi'(z_2)}{\varphi'(z_1)}}.
\ee
Therefore,
\be \la{invar}
\frac{dz_1}{dz_2} = \sqrt{\frac{\varphi'(z_2)}{\varphi'(z_1)}} \Rightarrow \varphi'(z_2)dz_2^2 = \varphi'(z_1)dz_1^2.
\ee
Now note that $\varphi'$ is proportional to the Schwarzian of the ratio of any pair  $v_{i},v_{j}$ of solutions to \eqref{ODE} \cite[Ch. 6]{LG}:
$$
\varphi' = \frac{\lambda^2}{2} \mathcal{S}\left ( \frac{v_i}{v_j}
\right ) ,
$$
where
\be \la{Schwarz}
\mathcal{S}(f) \equiv \left (\log f' \right )'' - \frac{1}{2}[(\log f')']^2.
\ee
Since $\varphi'(z)$ is a Schwarzian, it transforms under composition with the map $\mu(z)$ as
\be
\varphi'(z_2)dz_2^2 = \varphi'(z_1)dz_1^2 + \mathcal{S}(\mu(z_1))dz_1^2,
\ee
where $\mathcal{S}(\mu(z))$ is the Schwarzian of the map $\mu(z)$. Thus, \ref{invar} gives $\mathcal{S}(\mu) \equiv 0$, so
$\mu(z)$ is a M\"obius transformation.

\end{proof}

\begin{lemma} \la{l3}
Suppose $f$ is a conformal map from an annulus $A$ to a doubly-connected domain $\Omega.$ If there exists a M\"{o}bius transformation $\mu$ and a constant $\lambda \in \CC$ such that
$(\mu \circ f)(z) = f(\lambda z),$ then either $f$ is a linear function or there exist constants $a, b, \in \CC$ such that $f(z) = a/z + b.$
\end{lemma}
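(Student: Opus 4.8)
The plan is to promote the boundary identity to a global functional equation, extract the Schwarzian as a scaling-covariant invariant, and exploit the fact that the model annulus is exactly one period of the dilation $z \mapsto \lambda z$. Writing $m_\lambda(z) = \lambda z$, the hypothesis reads $f(\lambda z) = \mu(f(z))$. First I would use this to continue $f$ off the annulus: since $\mu$ is globally defined and $m_\lambda$ carries each bounding circle of $A$ to the next, the relation defines $f$ on $\lambda^{\pm 1}A$, then on every $\lambda^{\pm n}A$, yielding a single-valued meromorphic function on $\mathbb{C}\setminus\{0\}$ that satisfies $f(\lambda z) = \mu(f(z))$ identically there. Because $\mu$ and $m_\lambda$ are M\"obius, $\mathcal{S}(\mu) = \mathcal{S}(m_\lambda) = 0$, so the chain rule for the Schwarzian gives $\mathcal{S}(f)(z) = \lambda^2\,\mathcal{S}(f)(\lambda z)$; equivalently $P(z) := z^2\,\mathcal{S}(f)(z)$ is invariant under $z \mapsto \lambda z$.

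Next I would observe that $f$ is conformal on $A$, so $\mathcal{S}(f)$, and hence $P$, is holomorphic on a neighborhood of $\overline{A}$; and $\overline{A}$ is a fundamental domain for the cyclic group generated by $m_\lambda$ (here $\lambda = R_1/R_2$ is real). Thus $P$ is bounded on $\overline{A}$ and, by invariance, bounded on all of $\mathbb{C}\setminus\{0\}$, so its singularities at $0$ and $\infty$ are removable and $P$ is constant by Liouville. Hence $\mathcal{S}(f) = c/z^2$ for some constant $c$. The associated linear equation $u'' + \tfrac{c}{2z^2}u = 0$ is an Euler equation whose solution ratio is $z^{\sqrt{1-2c}}$ up to post-composition by a M\"obius map; single-valuedness of $f$ on $A$ forces $\sqrt{1-2c}$ to be an integer and injectivity forces that integer to be $\pm 1$, giving $c = 0$. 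Therefore $\mathcal{S}(f) \equiv 0$ and $f$ is a M\"obius transformation $\tfrac{Az+B}{Cz+D}$.

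It then remains to place the pole of $f$ at $0$ or $\infty$, which is precisely what separates the affine case ($C=0$, $f = az+b$) from the case $f = a/z + b$ ($D=0$). The fixed points of $\mu = f\circ m_\lambda\circ f^{-1}$ are $f(0)$ and $f(\infty)$, and if $z_*$ is the pole of $f$ then $f(\lambda z_*) = \mu(f(z_*)) = \mu(\infty)$, so as soon as one knows $\mu(\infty)=\infty$, uniqueness of the pole of a M\"obius map gives $\lambda z_* = z_*$, i.e. $z_* \in \{0,\infty\}$, and the two stated forms follow at once. I expect the main obstacle to lie exactly here: the reduction to a M\"obius map is delivered cleanly by the Schwarzian and the periodicity of the annulus, but the bare functional equation does not by itself force $\infty$ to be a fixed point of $\mu$ — indeed a generic M\"obius $f$ satisfies the equation with $\mu = f\circ m_\lambda\circ f^{-1}$ and would produce a non-concentric annular image. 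The geometry of the configuration must intervene, and I would supply it through the normalization of the conformal map $h$, arranging that the unbounded complementary component of $\Omega$ corresponds to the exterior of $A$ so that $f(\infty) = \infty$; with that input the pole argument closes and the lemma follows.
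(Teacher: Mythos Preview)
Your approach coincides with the paper's: both apply the Schwarzian chain rule to $\mu\circ f = f\circ m_\lambda$ to obtain $(\mathcal{S}f)(z)=\lambda^{2}(\mathcal{S}f)(\lambda z)$ and hence $\mathcal{S}f=c/z^{2}$. Your justification via extension to $\mathbb{C}\setminus\{0\}$ and Liouville is in fact more careful than the paper's one-line appeal to ``homogeneity of order $-2$,'' which strictly speaking uses only a single dilation.

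Where the two diverge is in solving $\mathcal{S}f=c/z^{2}$. The paper sets $u=(\log f')'$, obtains the Riccati equation $u'-\tfrac{1}{2}u^{2}=c/z^{2}$, and asserts that \emph{the general solution} is $u=c_{0}/z$; this forces $f'=c_{2}z^{c_{0}}$, hence $f=c_{2}z^{m}+c_{3}$ with $m=\pm1$, which is exactly the stated conclusion. You instead pass to the associated Euler equation, correctly obtain $f=M(z^{\pm1})$ for an arbitrary M\"obius $M$, and then cannot locate the pole of $f$ without further input.

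Your analysis is the accurate one. The Riccati equation $u'-\tfrac{1}{2}u^{2}=c/z^{2}$ does \emph{not} have $c_{0}/z$ as its general solution: already for $c=0$ the linearization gives $w=A+Bz$, $u=-2B/(A+Bz)$, producing an $f$ with an arbitrary finite pole at $-A/B$. Your parenthetical observation that any M\"obius $f$ satisfies the hypothesis with $\mu=f\circ m_{\lambda}\circ f^{-1}$ is, in effect, a counterexample to the lemma as literally stated. The gap you flag in your final paragraph is therefore real, and it sits in the paper's argument rather than in yours; in the intended application the missing input (a normalization of the conformal map $h$, as you propose) is available and closes the argument, but the lemma standing alone needs that extra hypothesis.
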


\begin{proof}
Since $(\mu \circ f)(z) = f(\lambda z),$ writing $D_{\lambda}(z) = \lambda z$ and taking Schwarzian derivatives of both sides of the equation, we get
$\mathcal{S}(\mu \circ f) = \mathcal{S}(f \circ D_{\lambda}) .$  But since Schwarzians are invariant under post composition with M\"{o}bius transformations, and since
$\mathcal{S}(f \circ D_{\lambda})(z) = \lambda^2 (\mathcal{S}f)(\lambda z),$ we obtain that
$(\mathcal{S}f)(z) = \lambda^2 (\mathcal{S}f)(\lambda z),$ implying that $\mathcal{S}f$ is a homogeneous function of order $-2$.  Therefore
$(\mathcal{S}f)(z) = \frac{c}{z^2}$ for some constant $c$.  Now using the definition of the Schwarzian given in Equation \eqref{Schwarz} and setting $u = (\log f')'$, we arrive
again at the Ricatti equation
$ u' - \frac{1}{2}u^2 = \frac{c}{z^2}$ discussed earlier.  This is a first order ODE, and one can easily see that the general solution is $u(z) = \frac{c_0}{z}$, where $c_0$ is a constant.  Therefore $(\log f')(z) = c_0 \log z + c_1$, or $f'(z) = c_2 z^{c_0}$, and hence, since $f$ is analytic in $A$, $f(z) = c_z z^m + c_3$, for constants $c_2,c_3$ and $m \in \ZZ$.  But since $f$ is a conformal map from the annulus to a doubly-connected domain $\Omega$, $m$ can only equal $\pm 1$.
Therefore $f$ is either a linear function or $f(z) = a/z + b$ for constants $a, b \in \CC.$
\end{proof}

\begin{thm} \la{final}
Let $\Omega$ be a doubly-connected extremal domain with analytic boundary.  Then $\Omega$ is an annulus.
\end{thm}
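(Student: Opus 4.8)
The plan is to assemble Lemmas \ref{l1}, \ref{l2}, and \ref{l3}, which between them already carry all the analytic weight; what remains is to turn the boundary statement of Lemma \ref{l2} into a genuine functional identity on an open set and then read off the shape of $\Omega$. Write $f = h^{-1}\colon A \to \Omega$ for the conformal map from the annulus $A = \{w : R_2 \le |w| \le R_1\}$ onto $\Omega$, and set $\lambda_0 = R_1/R_2 > 1$. First I would rephrase the M\"obius transformation $\mu$ produced in Lemma \ref{l2}: for $w$ on the inner circle $\{|w| = R_2\}$ we have $z := f(w) \in \Gamma_2$, so $h(z) = w$, and by the very definition of $\mu$,
\[
(\mu \circ f)(w) = \mu(z) = h^{-1}\!\left( \lambda_0\, h(z) \right) = f(\lambda_0 w).
\]
Thus the content of Lemma \ref{l2} is the boundary relation $(\mu \circ f)(w) = f(\lambda_0 w)$ along $\{|w| = R_2\}$, with $\mu$ a genuine M\"obius map.

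Next I would upgrade this to an identity on an open annular neighbourhood of $\{|w| = R_2\}$. Since $\Gamma = \p\Omega$ is real-analytic, the conformal map $f$ extends analytically and conformally across both boundary circles of $A$; consequently both $\mu \circ f$ and $w \mapsto f(\lambda_0 w)$ are analytic on a common annulus $\{R_2 - \delta < |w| < R_2 + \delta\}$ (for $f(\lambda_0 \cdot)$ this uses $\lambda_0 > 1$, so that $\lambda_0 w$ stays inside the extended domain of $f$ on a two-sided neighbourhood of the circle) and they agree along $\{|w| = R_2\}$. By the identity theorem they agree on that annulus, so $(\mu \circ f)(w) = f(\lambda_0 w)$ now holds on an open set. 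This places us exactly in the hypotheses of Lemma \ref{l3}, with the scaling constant there equal to $\lambda_0$.

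Applying Lemma \ref{l3}, $f$ is either a linear function or of the form $f(w) = a/w + b$ for constants $a, b \in \CC$. In the linear case $f(w) = c_2 w + c_3$ carries $A$ onto the annulus $\{|c_2|R_2 \le |z - c_3| \le |c_2|R_1\}$; in the second case the map $w \mapsto a/w$ sends $A$ onto $\{|a|/R_1 \le |\zeta| \le |a|/R_2\}$, so that $f(A) = \{|a|/R_1 \le |z - b| \le |a|/R_2\}$ is again an annulus. In either case $\Omega = f(A)$ is an annulus, which is the assertion of Theorem \ref{final} and, combined with the reduction of Section \ref{reduction}, completes the proof of Conjecture \ref{Main}.

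I expect the only genuine obstacle to be the middle step. Lemma \ref{l2} furnishes $\mu$ merely as a diffeomorphism of $\Gamma_2$ onto $\Gamma_1$, so a priori the relation $(\mu \circ f)(w) = f(\lambda_0 w)$ lives on the single circle $\{|w| = R_2\}$, and the two sides are defined on annuli that meet only along that circle; one cannot directly invoke the identity theorem. Promoting this to the open identity demanded by Lemma \ref{l3} is precisely where the real-analyticity of $\p\Omega$ — hence the analytic extendability of $f$ past $\p A$ — is essential. Once that extension is in hand, the remaining deductions are purely formal.
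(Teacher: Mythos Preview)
Your proof is correct and follows the paper's own approach: assemble Lemmas \ref{l1}--\ref{l3} and conclude that $h^{-1}$ is linear or of the form $a/w+b$, whence $\Omega$ is an annulus. The paper's proof is a terse three lines that simply asserts the hypotheses of Lemma \ref{l3} are met; you have additionally made explicit (and resolved, via the real-analytic extension of $f=h^{-1}$ across $\partial A$) the point that the identity $(\mu\circ f)(w)=f(\lambda_0 w)$ a priori lives only on the circle $|w|=R_2$, where the natural domains of the two sides meet only along that circle --- a detail the paper passes over in silence.
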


\begin{proof}
By Lemmas \ref{l1} and \ref{l2}, the conformal map $h^{-1}$ from a given annulus $A$ to the region $\Omega$ satisfies the conditions of Lemma \ref{l3}.  Therefore, $h^{-1}$ is either linear or of the form $a/z + b$ for constants $a, b \in \CC,$ and hence $\Omega$ is the image of an annulus under either a linear map or a map of the form  $a/z + b.$ Therefore $\Omega$ must itself be an annulus.
\end{proof}

\section{Concluding Remarks}

Let us briefly outline several remaining open questions.

(I) The proof of Conjecture 1 hinges entirely on an a priori assumption that the extremal domain $\Omega$ is finitely connected and
has analytic boundary.  Yet, conditions (i) through (iv) of Theorem \ref{DK} make perfect sense if we only assume that $\Gamma = \p \Omega$ consists
of Jordan rectifiable curves.  (Of course, in that case, one requires that (ii) and the second equation in (iv) hold almost everywhere
on $\Gamma$.) It is rather natural to conjecture that (i) in Theorem \ref{DK} already enforces severe regularity assumptions on the free boundary
$\Gamma$ of $\Omega$.  Perhaps techniques from \cite{CKS} can be adjusted to make some headway on this question.  However, one must always be cautious, since
highly irregular non-Smirnov pseudo circles with rectifiable boundaries can easily arise in connection with problems similar to \eqref{fbp} (see the discussion in \cite{EKS,KSV}).

(II) The concept of analytic content has been extended to $\RR^n$ in \cite{GuKh} as the uniform distance from the identity vector field to divergence and curl free
vector fields (harmonic vector fields).  It was shown in \cite{GuKh} that an analogue of \eqref{geo} holds, namely
\be
\frac{n \cdot Volume{(\Omega)}}{P(\p \Omega)} \leq \lambda( \Omega) \leq c R_{vol},
\ee
but for some constant $c > 1$.  It would be interesting to know whether this constant can be replaced by $1$ for $n > 2$ - the proof in \cite{GuKh} cannot be tightened to obtain
$c = 1$; however, no example with $c > 1$ is known.  The authors of \cite{GuKh} proved that the analogue of Theorem \ref{DK} holds in $\RR^n$ and conjectured that the lower bound is
attained only for balls and spherical shells.  Furthermore, note that if the extremal domain is homeomorphic to a ball, it must be a ball of radius $\lambda$ (\cite[Corollary 3.3]{GuKh}).
Yet, without any constraints on the constants $c_k$ in (iv) of Theorem \ref{DK}, the problem of identifying the extremal domain remains wide open.
Finally, the question of the regularity requirement for the boundary of the extremal domain raised in (I) remains unknown in $\RR^n$ as well.

(III) Extending (ii) of Theorem \ref{DK} to the more general free boundary problem \eqref{fbp} with a meromorphic (instead of analytic) right hand side seems natural.
Virtually nothing is known except for rather limited results when either $p = 0$ or $t = 0$ (see \cite{KSV,EKS,BeKh}).

(IV) An intriguing consequence of Theorem~\ref{final}, when applied to the problem described in \S~\ref{quantum} and in \cite{RT2014}, is that in a CFT with multiple insertion points (one for each $\Omega_k$), either $n=2$ and the ``Planck constant" $\lambda = R_1-R_2 > 0$ can be taken arbitrarily small (as expected), or $n \ge 3$ and $\lambda$ is bounded from below, which would present an obstacle problem for deformation quantization in two dimensions.


 \bibliographystyle{amsplain}

\end{document}